\documentclass[11pt,reqno]{amsart}

\usepackage{amsmath,amssymb,amsthm,mathtools}
\usepackage[margin=1.15in]{geometry}
\usepackage{xcolor}
\definecolor{linkblue}{rgb}{0.05,0.15,0.45}
\usepackage[colorlinks=true,linkcolor=linkblue,citecolor=linkblue,urlcolor=linkblue]{hyperref}
\usepackage{enumitem}

\theoremstyle{plain}
\newtheorem{theorem}{Theorem}[section]
\newtheorem{lemma}[theorem]{Lemma}
\newtheorem{proposition}[theorem]{Proposition}
\newtheorem{corollary}[theorem]{Corollary}
\newtheorem*{thmA}{Theorem~\ref{thm:final}}
\theoremstyle{remark}

\newcommand{\defn}{\textbf}

\newcommand{\Ft}{\mathbb{F}_2}
\newcommand{\Fp}{\mathbb{F}_p}
\newcommand{\ad}{\operatorname{ad}}
\newcommand{\gr}{\operatorname{gr}}
\newcommand{\spann}[1]{\langle #1\rangle}

\title[Restricted Lie algebras with isomorphic enveloping algebras]
{Non-isomorphic restricted Lie algebras\texorpdfstring{\\}{ }with isomorphic
restricted enveloping algebras}

\author[X. García-Martínez]{Xabier Garc\'ia-Mart\'inez}
\address{CITMAga \& Universidade de Santiago de Compostela, Departamento de
Matem\'aticas, E--15782 Santiago de Compostela, Spain}
\email{xabier.garcia@usc.gal}
\thanks{This work was supported by the project PID2024-155502NB-I00 granted by MICIU/AEI/10.13039/501100011033 and by Xunta de Galicia through the Competitive Reference Groups (GRC), ED431C 2023/31.}

\subjclass[2020]{17B50, 17B35, 16S30}
\keywords{Restricted Lie algebra, restricted enveloping algebra, isomorphism problem, restricted isomorphism problem}

\begin{document}

\begin{abstract}
Let $p$ be a prime. For every field $F$ of characteristic $p$ we exhibit pairs
of non-isomorphic finite-dimensional $p$-nilpotent restricted Lie algebras $L$
and $H$ over $F$ whose restricted enveloping algebras $u(L)$ and $u(H)$ are
isomorphic as $F$-algebras. Such pairs exist in every dimension at least
$p+5$, with $\dim L'=p$ and $\dim H'=p+1$. Thus, the restricted isomorphism problem has a negative answer over every
field of positive characteristic, even for $p$-nilpotent algebras over
perfect fields. 
\end{abstract}

\maketitle

\section{Introduction}\label{sec:intro}

Whether a finite group is determined by its group algebra is one of the
classical questions of representation theory. In general, the answer
is negative: the dihedral and the quaternion group of order eight have
isomorphic complex group algebras, and even over the integers the answer is
negative, by Hertweck's celebrated counterexample~\cite{Hertweck}. A particularly delicate form of the question is the modular one: 
if $G$ and $H$ are finite
$p$-groups and $F$ is a field of characteristic $p$, does $FG\cong FH$ imply~$G\cong H$? This is the \emph{modular isomorphism problem}, which appears in Brauer's 1963 problem list~\cite{Brauer}, and for half a century it
accumulated some positive results: Deskins proved it for abelian $p$-groups
\cite{Deskins}, Passi and Sehgal for groups of class two and exponent $p$
\cite{PassiSehgal}, and many further families followed (we refer to the
surveys of Sandling~\cite{Sandling} and Margolis~\cite{Margolis}). However, in the
article~\cite{GLMdR}, Garc\'ia-Lucas, Margolis and del~R\'io found a negative solution:
there exist non-isomorphic $2$-groups of order
$2^9$ with isomorphic group algebras over every field of characteristic two.

Modular group algebras are tied to restricted Lie algebras in a precise way.
Jennings~\cite{Jennings} showed that the dimension subgroups of a finite
$p$-group $G$ assemble into a graded restricted Lie algebra, and Quillen
\cite{Quillen} identified the associated graded algebra of $FG$ with respect
to the augmentation filtration as its restricted enveloping algebra. 
This leads to the following restricted analogue of the isomorphism problem: 
\begin{description}
	\item[Restricted Isomorphism Problem]
		Let $L$ and $H$ be two restricted Lie algebras over a field $F$ of positive characteristic.
		If $u(L)\cong u(H)$ as $F$-algebras, must $L\cong H$ as restricted Lie algebras?
\end{description}
Usefi's survey~\cite{UsefiSurvey} raises it twice among its open problems:
Problem~6 asks for a counterexample, and Problem~9 asks whether the answer is
positive at least for $p$-nilpotent restricted Lie algebras over perfect
fields.

The known results are closely related to the group case. In the article~\cite{UsefiPAMS},
Usefi proved that~$u(\mathfrak{g})$ determines $\mathfrak{g}$ when it is $p$-nilpotent and metacyclic
over a perfect field. In~\cite{UsefiPacific}, he proved that $u(\mathfrak{g})$ determines
$\mathfrak{g}$ when it is $p$-nilpotent and abelian over a perfect field, as well as when
$\mathfrak{g}$ is abelian and finite-dimensional over an algebraically closed field. In the
same article he also established that $u(\mathfrak{g})$ determines the
dimension-subalgebra quotients $D_i(\mathfrak{g})/D_{i+1}(\mathfrak{g})$, hence the associated
graded restricted Lie algebra $\gr \mathfrak{g}$, and, for $p$-nilpotent algebras,
the nilpotency class up to an error of one, and that
the quotient
$\mathfrak{g}/({\mathfrak{g}'}^{p}+\gamma_3(\mathfrak{g}))$ is determined as well. On the other hand, for the ordinary universal
enveloping algebra $U(\mathfrak{g})$ the answer in prime characteristic
has long been known to be negative. Examples of non-isomorphic Lie algebras
with isomorphic universal enveloping algebras have been known at least since
Riley and Usefi~\cite[Example~A]{RileyUsefi}. Schneider and Usefi
\cite{SchneiderUsefi} proved that nilpotent Lie algebras of dimension at most
six are determined by $U(\mathfrak{g})$ whenever the characteristic of the field is not $2$ or $3$, exhibiting at the same time nilpotent
counterexamples of dimensions five and six in characteristics two and three,
and for every prime $p$ there is a pair of non-isomorphic metabelian
nilpotent Lie algebras of dimension $p+3$ whose universal enveloping algebras
are isomorphic even as Hopf algebras (see~\cite{SchneiderUsefi} and
\cite[Example~6.1]{UsefiSurvey}).

Furthermore, in characteristic zero, the answer for
nilpotent Lie algebras is positive: Problem~5 of~\cite{UsefiSurvey} was
recently settled, affirmatively, by Campos, Petersen, Robert-Nicoud and
Wierstra~\cite{CPRW}, who showed that a nilpotent Lie algebra over a field of
characteristic zero is determined by its universal enveloping algebra. The preceding results, 
however, concern the infinite-dimensional algebra $U(\mathfrak{g})$ and
say nothing about the restricted enveloping algebra~$u(\mathfrak{g})$: the
restricted isomorphism problem remained open in every characteristic.

The aim of this article is to settle the problem by constructing such
examples over every field of positive characteristic.

\begin{thmA}\label{thm:A}
Let $F$ be a field of characteristic $p>0$. For every integer $n\ge p+5$
there exist $p$-nilpotent restricted Lie algebras $L$ and $H$ over $F$ with
$\dim L=\dim H=n$,
\[
u(L)\cong u(H)\ \text{ as $F$-algebras},\qquad
\dim L'=p\neq p+1=\dim H' .
\]
\end{thmA}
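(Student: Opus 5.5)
We will construct $L$ and $H$ explicitly and then write down an explicit algebra isomorphism $u(L)\to u(H)$. The idea, borrowed from the Schneider--Usefi examples for $U(\mathfrak g)$ in dimension $p+3$, is to exploit a $p$-th power phenomenon: in $u(L)$ an element $x^p$ lies in $L$ via the $p$-map, but a sum such as $(x+y)^p$ differs from $x^p+y^p$ by Jacobson's $\sum s_i(x,y)$. The latter involves the $(p-1)$-fold commutators $\ad(x)^{p-1}(y)$. So an \emph{associative} change of generators that is not a Lie map can turn a $p$-map value into a commutator, or conversely. This is how $\dim L'$ can change while $u$ stays fixed.

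\textbf{Construction.} For $L$ we take a filiform-type core of dimension $p+3$ or so. It has basis $x,y,e_1,\dots,e_{p}$ (and perhaps $z$) with $[x,y]=e_1$, $[x,e_i]=e_{i+1}$, so that $L'=\spann{e_1,\dots,e_p}$ has dimension $p$. The $p$-map is chosen so that $x^{[p]}=0$, $y^{[p]}=z$ with $z$ central, and the $e_i$ have trivial $p$-map. This makes $L$ $p$-nilpotent. For $H$ we keep the same space and brackets except for one extra central commutator, $[y,e_j]=z$ for a suitable $j$ (or $[x,e_p]=z$). Then $z\in H'$, so $\dim H'=p+1$, and the $p$-map of $H$ is adjusted so that $y^{[p]}$ is correspondingly modified. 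To reach every dimension $n\ge p+5$ we add a central (or abelian direct) summand $F^{n-p-5+k}$ with zero $p$-map to both algebras. This preserves the isomorphism, because $u(L\oplus A)\cong u(L)\otimes u(A)$, and it preserves the derived dimensions. Base field independence is automatic, since all structure constants lie in $\Fp$.

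\textbf{Isomorphism.} We define $\phi:u(L)\to u(H)$ on generators by $x\mapsto x$, $y\mapsto y+\lambda\, x^{p-1}$ or a similar nonlinear correction, $e_i\mapsto$ the corresponding commutators computed in $u(H)$, and $z\mapsto$ the image of $y^p$. By the universal property of $u(L)$ it suffices to check two things: that $\phi$ preserves the defining brackets, and that $\phi(w)^p=\phi(w^{[p]})$ for basis elements $w$. Both are computations inside $u(H)$. The bracket checks reduce to identities such as $\ad(x)^{k}(x^{p-1}y)$ expanded in PBW form. The $p$-power check uses Jacobson's formula $(a+b)^p=a^p+b^p+\sum s_i(a,b)$. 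Here the term $\ad(x^{p-1})$ acting, or equivalently the term $\ad(x)^{p-1}(y)$, is exactly what produces the extra central $z$ that realizes the new commutator. Surjectivity follows because the images of the generators span $H$ modulo the augmentation ideal squared. Since $\dim u(L)=p^{n}=\dim u(H)$, $\phi$ is then an isomorphism.

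\textbf{Main obstacle.} The delicate step is choosing the correction term so that all the $p$-th power relations hold simultaneously while the bracket relations stay intact. In particular, $\phi(y)^p$ must equal $\phi(z)$ exactly, not merely modulo higher terms. This is where the hypothesis $n\ge p+5$ and the precise nilpotency length $p$ of the $\ad x$-chain enter. I would first try to work in the associated graded algebra to guess the correct correction, and then verify the relation in $u(H)$ by computing $(y+\lambda x^{p-1}e)^p$ with $e$ central-ish, using that most mixed products vanish by nilpotency.

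Finally, $L\not\cong H$ because $\dim L'\ne\dim H'$, and the derived dimension is an isomorphism invariant.
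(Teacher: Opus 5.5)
There is a genuine gap. The proposal never pins down the pair $(L,H)$ or the map $\phi$: it says ``dimension $p+3$ or so'', ``a suitable $j$'', and ``$y\mapsto y+\lambda x^{p-1}$ or a similar nonlinear correction''. The step you flag as the main obstacle, making every bracket and $p$-power relation hold exactly in $u(H)$, is the whole content of the theorem.

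Moreover, the partial construction you do specify cannot be completed.
\begin{enumerate}
\item Setting $x^{[p]}=0$ violates Jacobson's condition $\ad(x^{[p]})=(\ad x)^p$, because your chain gives $(\ad x)^p(y)=e_p\neq0$.
\item More decisively, no choice of $\phi$ can work when the extra commutator $z$ of $H$ lies outside the restricted ideal $I_L$ generated by $L'$. The quotient of $u(\mathfrak g)$ by the ideal generated by all commutators is $u(\mathfrak g/I_{\mathfrak g})$. Its dimension $p^{\dim\mathfrak g-\dim I_{\mathfrak g}}$ is an invariant of the algebra. In your design the $e_i$ have trivial $p$-map and $z=y^{[p]}$ is central, so $L'$ is already $p$-closed and $\dim I_L=p$. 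On the other side, $\dim I_H\ge\dim H'=p+1$. Since $\dim L=\dim H$, this gives $u(L)\not\cong u(H)$ for every design of this shape, whatever $\phi$ is.
\end{enumerate}

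The missing idea is to make the new commutator of $H$ equal to the top of a $p$-power tower that already starts inside $L'$. Then $L'$ and $H'$ have $p$-closures of the same dimension.
\begin{itemize}
\item In the paper (odd $p$), $b_{p^2}=[a,v_{p-1}]=[a_p,b]\in L'$ and $b_{p^2}^{[p]^{k-2}}=d$. In $H$, the top element $c$ is at once $x_p^{[p]}$, $y_{p^{k-1}}^{[p]}$ and $[y,u_1]$.
\item The isomorphism is $X=x+\alpha$, $Y=y$, with $\alpha=2^{-1}x_p^{2}y_{p^2}^{q-2}$ and $q=p^{k-2}$. Here $\alpha$ commutes with $x$ and $\alpha^p=0$. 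This needs $q\ge p$, i.e.\ $k\ge3$, which is where $n\ge p+5$ comes from.
\item Consequently $X^p=x_p$ with no Jacobson cross terms at all. Meanwhile $[X,Y]=u_1+t$ with $t=x_py_{p^2}^{q-1}$, and $[y,t]=-c$ cancels $[y,u_1]=c$.
\item Surjectivity holds because $y$, $y_{p^2}=Y^{p^2}$ and $x_p=X^p$ lie in the image, hence so do $\alpha$ and $x$.
\end{itemize}
So the correction is a product of tower elements deep inside $u(H)$, not a term produced by $\ad(x)^{p-1}(y)$. The case $p=2$ needs a separate construction: $Y=x+y+xg$, with $g$ a polynomial in $y^4$ satisfying $g(x^4+y^4)=x^4$.

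The peripheral parts of your plan are sound: padding by an abelian summand with zero $p$-map, base change from $\Fp$, and surjectivity via $\omega/\omega^2$. The padding would be a legitimate alternative to the paper's lengthening of the tower, but only once a base pair actually exists.
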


As a consequence, the restricted isomorphism problem has a negative
answer over every field of positive characteristic, already within the class
of $p$-nilpotent restricted Lie algebras over perfect fields.
Neither the dimension of the derived subalgebra nor the lower central series
of a restricted Lie algebra is determined by its restricted enveloping
algebra. In particular, the answers to Problems~6 and~9 of
\cite{UsefiSurvey} are negative.

The manuscript is organised as follows. After some short preliminaries, in Section~\ref{sec:char2} we construct a parametric family of examples
over $\Ft$. Since this construction does not carry over verbatim to odd characteristic, in Section~\ref{sec:oddp} we present a new family of examples for any~$\Fp$. 
In Section~\ref{sec:fields} we put everything together and prove the main
theorem over an arbitrary field of positive characteristic.
We conclude in Section~\ref{sec:concl} with remarks on how the examples sit
among the determined invariants, on the non-restricted enveloping algebra, and on the computer search that produced the first
counterexample.

\section{Preliminaries}\label{sec:prelim}

Throughout the article $p$ is a prime and $F$ a field of characteristic $p$. 
All the results in this section can be found in~\cite{Jacobson} and in~\cite{SF}.
A \defn{restricted Lie algebra} over $F$ is a Lie algebra $\mathfrak{g}$ equipped with a
map $x\mapsto x^{[p]}$, usually called \defn{$p$-map}, such that
$\ad(x^{[p]})=(\ad x)^p$, $(\lambda x)^{[p]}=\lambda^p x^{[p]}$ and
\begin{equation}\label{eq:jacobsonformula}
(x+y)^{[p]}=x^{[p]}+y^{[p]}+\sum_{i=1}^{p-1}s_i(x,y),
\end{equation}
where the $s_i$ are the universal Lie polynomials of Jacobson's formula:
 $i\,s_i(x,y)$ is the coefficient of $t^{i-1}$ in $\ad(tx+y)^{p-1}(x)$. All we
shall use about them is that each $s_i(x,y)$ is a sum of iterated brackets of
$x$ and $y$. For $p=2$
formula \eqref{eq:jacobsonformula} is $(x+y)^{[2]}=x^{[2]}+y^{[2]}+[x,y]$.
Jacobson's formula also holds for the $p$-th power map of any associative
$F$-algebra, with $[r,r']\coloneqq rr'-r'r$. In particular, in any associative
$F$-algebra,
\begin{equation}\label{eq:square}
(r+r')^p=r^p+r'^p \text{ whenever } [r,r']=0,
\text{ and, for } p=2,
\Bigl(\sum_i r_i\Bigr)^{2}=\sum_i r_i^{2}+\sum_{i<j}\,[r_i,r_j].
\end{equation}
We shall use throughout, without further mention, that the commutator is a
derivation in each variable: $[r,r'r'']=[r,r']\,r''+r'\,[r,r'']$ and
$[rr',r'']=r\,[r',r'']+[r,r'']\,r'$.

We say that $\mathfrak{g}$ is \defn{$p$-nilpotent} if for every $x\in \mathfrak{g}$ there is an $N$
with $x^{[p]^N}=0$, where~$x^{[p]^N}$ denotes the $N$-fold iterate of the
$p$-map. A basic tool for constructing restricted structures is
\defn{Jacobson's theorem}: if $\mathfrak{g}$ is a Lie algebra over $F$ with basis $\{e_i\}$
and $h_i\in \mathfrak{g}$ are vectors satisfying $\ad h_i=(\ad e_i)^p$ for every $i$,
then $\mathfrak{g}$ carries a unique $p$-map with $e_i^{[p]}=h_i$.

The \defn{restricted enveloping algebra} of $\mathfrak{g}$ is
$u(\mathfrak{g})=U(\mathfrak{g})/\bigl(x^p-x^{[p]}:x\in \mathfrak{g}\bigr)$. By the
Poincar\'e--Birkhoff--Witt theorem for restricted enveloping algebras, if $\mathfrak{g}$ has basis
$e_1,\dots,e_n$ then the \emph{ordered restricted monomials}
$e_1^{\alpha_1}\cdots e_n^{\alpha_n}$ with $0\le\alpha_i\le p-1$ form a basis
of $u(\mathfrak{g})$. In particular $\dim u(\mathfrak{g})=p^{\,n}$ and $\mathfrak{g}$ embeds into $u(\mathfrak{g})$. The
algebra $u(\mathfrak{g})$ has the expected universal property: every homomorphism of
restricted Lie algebras from $\mathfrak{g}$ to an associative $F$-algebra $A$, viewed as
a restricted Lie algebra under the commutator and the $p$-map,
extends uniquely to an algebra homomorphism~$u(\mathfrak{g})\to A$.

\begin{lemma}[\cite{Jacobson}]\label{lem:pres}
Let $\mathfrak{g}$ be a restricted Lie algebra with basis $e_1,\dots,e_n$. Then, as an
associative algebra,
\begin{equation*}
u(\mathfrak{g})\;=\;F\langle e_1,\dots,e_n\rangle\Big/
\bigl(\,e_ie_j-e_je_i-[e_i,e_j],\quad e_i^{\,p}-e_i^{[p]}\,\bigr)_{i,j}\,.
\tag{P}\label{eq:pres}
\end{equation*}
\end{lemma}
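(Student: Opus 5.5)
The plan is to compare two algebras. Let $A$ denote the right-hand side of \eqref{eq:pres}, the free algebra $F\langle e_1,\dots,e_n\rangle$ modulo the two-sided ideal $I$ generated by the relations listed there. We build mutually inverse homomorphisms between $A$ and $u(\mathfrak{g})$, using the universal properties on both sides together with the restricted PBW theorem.

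First we construct $\varphi\colon A\to u(\mathfrak{g})$. The free algebra maps to $u(\mathfrak{g})$ by sending $e_i$ to its image in $u(\mathfrak{g})$. In $u(\mathfrak{g})$ we have $e_ie_j-e_je_i=[e_i,e_j]$, because $U(\mathfrak{g})$ already imposes this. We also have $e_i^p=e_i^{[p]}$, because $u(\mathfrak{g})$ is defined by killing $x^p-x^{[p]}$. Hence $I$ lies in the kernel, and $\varphi$ is well defined. It is surjective because the $e_i$ generate $U(\mathfrak{g})$, and hence generate $u(\mathfrak{g})$.

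Next we construct $\psi\colon u(\mathfrak{g})\to A$ via the universal property. Define the linear map $\iota\colon\mathfrak{g}\to A$ by $e_i\mapsto \bar e_i$.
\begin{itemize}
\item \emph{Bracket.} $\iota$ is a Lie homomorphism. By bilinearity it suffices to check basis elements, where $[\bar e_i,\bar e_j]=\overline{[e_i,e_j]}$ holds by the first family of relations.
\item \emph{$p$-map.} We must show $\iota(x)^p=\iota(x^{[p]})$ for every $x\in\mathfrak{g}$, not only for basis vectors. This is the main step.
\end{itemize}
For the $p$-map, write $x=\sum\lambda_ie_i$ and argue by induction on the number of nonzero terms. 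For a single term, $(\lambda e_i)^p=\lambda^p\bar e_i^{\,p}=\lambda^p\overline{e_i^{[p]}}$, and this agrees with $\iota((\lambda e_i)^{[p]})$ by semilinearity of the $p$-map. For a sum $y+z$, apply Jacobson's formula \eqref{eq:jacobsonformula} twice: once in the associative algebra $A$, where it holds for the $p$-th power map, and once in $\mathfrak{g}$. This gives
\[
\iota(y+z)^p=\iota(y)^p+\iota(z)^p+\sum_{i=1}^{p-1} s_i(\iota y,\iota z),
\]
and the corresponding expansion of $(y+z)^{[p]}$ in $\mathfrak{g}$. The terms match because each $s_i$ is a sum of iterated brackets, and $\iota$ preserves brackets. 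Thus $\iota$ is a restricted homomorphism. The universal property of $u(\mathfrak{g})$ then gives $\psi$ with $\psi(e_i)=\bar e_i$.

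Finally, $\psi\circ\varphi$ and $\varphi\circ\psi$ fix the generators $\bar e_i$ and $e_i$ respectively. Both algebras are generated by these elements, so both composites are the identity, and $A\cong u(\mathfrak{g})$. Alternatively, one can skip $\psi$ and check injectivity of $\varphi$ by counting dimensions. Using the relations, every word in $A$ can be rewritten as a linear combination of ordered restricted monomials, so $\dim A\le p^n$. Since $\dim u(\mathfrak{g})=p^n$ by PBW and $\varphi$ is surjective, $\varphi$ is an isomorphism. The only step that needs care is extending the $p$-power identity from basis vectors to all of $\mathfrak{g}$, which the Jacobson-formula induction handles.
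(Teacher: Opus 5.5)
Your proof is correct. The paper gives no argument of its own for this lemma; it simply quotes it from Jacobson. What you wrote is the standard proof.

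Its only substantive step is propagating $e_i^{\,p}=e_i^{[p]}$ from the basis to all of $\mathfrak{g}$, by applying Jacobson's formula \eqref{eq:jacobsonformula} in $A$ and in $\mathfrak{g}$ and matching the Lie polynomials $s_i$. This is the same computation that, in the classical treatment, shows $x\mapsto x^p-x^{[p]}$ is $p$-semilinear on $U(\mathfrak{g})$. Hence the ideal $\bigl(x^p-x^{[p]}:x\in\mathfrak{g}\bigr)$ is generated by the $n$ elements $e_i^{\,p}-e_i^{[p]}$. Combined with the presentation of $U(\mathfrak{g})$ by the basis commutation relations, that yields \eqref{eq:pres} directly. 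Your version reaches the same conclusion through the two universal properties instead, so it does not need the presentation of $U(\mathfrak{g})$ as a separate input.

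Two small remarks. First, your main route (the mutually inverse $\varphi$ and $\psi$) does not actually use the restricted PBW theorem, contrary to your opening sentence; PBW enters only in your alternative dimension count. Second, in that alternative, the claim that every word reduces to a combination of ordered restricted monomials needs the usual induction on word length. It works because the correction terms $[e_i,e_j]$ and $e_i^{[p]}$ are linear in the generators, so each rewriting step either lowers the length or, at fixed length, reduces the number of inversions.
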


\section{Characteristic two}\label{sec:char2}

Throughout this section we work over the field $\Ft$ of two elements. We fix an integer $k\ge3$ and set
\[
q\coloneqq 2^{k-1}\ \ (\text{so } q\ge4),\qquad n\coloneqq k+4 .
\]

Let $L_k$ be the $n$-dimensional restricted Lie algebra over $\Ft$ with basis
\[
a,\ b,\ b_2,\ a_2,\ v,\ b_4,\ b_8,\ \dots,\ b_{q},\ d
\qquad(\text{the } b_{2^j} \text{ for } 2\le j\le k-1,\ \text{and } d),
\]
$2$-map
\begin{gather*}
a^{[2]}=a_2,\quad a_2^{[2]}=d,\quad
b^{[2]}=b_2,\quad b_2^{[2]}=b_4,\\
b_{2^j}^{[2]}=b_{2^{j+1}}\ (2\le j\le k-2),
\quad b_{q}^{[2]}=d,\quad v^{[2]}=d^{[2]}=0,
\end{gather*}
with nonzero brackets
\[
[a,b]=v,\qquad [a_2,b]=b_4,\qquad [a,v]=b_4 .
\]
Let $H_k$ be the $n$-dimensional restricted Lie algebra over $\Ft$ with basis
\[
x,\ y,\ y_2,\ x_2,\ w,\ y_4,\ y_8,\ \dots,\ y_{q},\ c,
\]
$2$-map
\begin{gather*}
x^{[2]}=x_2,\quad x_2^{[2]}=c,\quad
y^{[2]}=y_2,\quad y_2^{[2]}=y_4,\\
y_{2^j}^{[2]}=y_{2^{j+1}}\ (2\le j\le k-2),
\quad y_{q}^{[2]}=c,\quad w^{[2]}=c^{[2]}=0,
\end{gather*}
and nonzero brackets
\[
[x,y]=w,\qquad [x,y_2]=y_4,\qquad [y,w]=y_4,\qquad
[x_2,y]=y_4+c,\qquad [x,w]=y_4+c.
\]
Each algebra is generated by its first two basis vectors, the two $2$-power towers
merge at the top, and the eighth power of the first generator vanishes.

\begin{lemma}\label{lem:valid2}
$L_k$ and $H_k$ are restricted Lie algebras of dimension $n=k+4$, generated
by $a,b$ and by $x,y$ respectively. Both have nilpotency class $3$, with
lower central dimension sequences $(n,2,1,0)$ for $L_k$ and $(n,3,2,0)$ for
$H_k$. In particular
\[
L_k'=\spann{v,b_4},\qquad H_k'=\spann{w,y_4,c}.
\]
\end{lemma}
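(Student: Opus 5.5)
The plan is to verify the statement directly from the multiplication tables, in four steps. First we check that the brackets define Lie algebras. Then we check, via Jacobson's theorem from Section~\ref{sec:prelim}, that the prescribed values on the basis extend to $2$-maps. After that come generation and, finally, the lower central series.

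\emph{Dimension count and Jacobi identity.} The basis of $L_k$ consists of $a,b,b_2,a_2,v,d$ together with the $k-2$ vectors $b_4,\dots,b_q$, so $\dim L_k=k+4=n$. The same count applies to $H_k$. When $k=3$ we have $q=4$, so $b_4=b_q$ and $b_4^{[2]}=d$, which is consistent. In $L_k$ the only nonzero brackets are $[a,b]=v$, $[a_2,b]=b_4$ and $[a,v]=b_4$, and the vector $b_4$ is central. Every double bracket therefore lands in the central span $\spann{b_4}$, and it vanishes whenever one of its entries is $b_4$. The Jacobi identity thus reduces to the finitely many triples drawn from $\{a,b,a_2,v\}$, and all of these are immediate. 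For example, $[a,[b,v]]+[b,[v,a]]+[v,[a,b]]=0+[b,b_4]+0=0$. The same reasoning applies to $H_k$: all brackets lie in $\spann{w,y_4,c}$, the vectors $y_4$ and $c$ are central, and the only brackets into $w$ are $[x,y]=w$. So only the triples built from $x,y,y_2,x_2,w$ need to be checked.

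\emph{The $2$-maps.} By Jacobson's theorem it suffices to verify $\ad(e^{[2]})=(\ad e)^2$ for each basis vector $e$; the Jacobson formula then determines the $2$-map everywhere. Whenever $e^{[2]}$ is central (this covers $e$ equal to $v$, $d$, $b_{2^j}$ for $j\ge1$, $a_2$, and their analogues in $H_k$), we need $(\ad e)^2=0$. This holds because the image of $\ad e$ is central. The substantive checks are on the generators, and we use that signs are irrelevant in characteristic two:
\begin{align*}
(\ad a)^2 b&=[a,v]=b_4=[a_2,b], & (\ad b)^2 a&=[b,v]=0=[b_2,a],\\
(\ad x)^2 y&=[x,w]=y_4+c=[x_2,y], & (\ad y)^2 x&=[y,w]=y_4=[y_2,x].
\end{align*}
On all remaining basis vectors both sides vanish. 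For instance, $(\ad x)^2 y_2=[x,y_4]=0=[x_2,y_2]$. One must also confirm that $\ad a_2$ and $\ad x_2$ kill every basis vector other than $b$ and $y$, respectively. I expect this bookkeeping to be the main obstacle. It is not conceptually hard, but the entry $[x_2,y]=y_4+c$ is precisely what forces $(\ad x)^2=\ad(x_2)$, and one has to make sure that no overlooked bracket, such as $[x_2,w]$ or $[y_2,w]$, would be required to be nonzero.

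\emph{Generation and lower central series.} Iterating the $2$-map on $a$ gives $a_2$ and $d$, and iterating it on $b$ gives $b_2,b_4,\dots,b_q,d$. Together with $v=[a,b]$ this accounts for the whole basis, so $L_k$ is generated by $a,b$; the same argument applies to $H_k$ and $x,y$. The derived algebra is spanned by the bracket values. For $L_k$ this gives $L_k'=\spann{v,b_4}$, then $\gamma_3=[L_k,L_k']=\spann{[a,v]}=\spann{b_4}$, and finally $\gamma_4=0$ because $b_4$ is central. For $H_k$, the values $w$, $y_4$ and $y_4+c$ span $H_k'=\spann{w,y_4,c}$, which has dimension $3$. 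Next, $\gamma_3=[H_k,H_k']$ contains $[y,w]=y_4$ and $[x,w]=y_4+c$, so $\gamma_3=\spann{y_4,c}$, and $\gamma_4=0$ by centrality. This gives the lower central dimension sequences $(n,2,1,0)$ and $(n,3,2,0)$, and in particular both algebras have class $3$.
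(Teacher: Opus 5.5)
Your proposal is correct and follows the paper's proof essentially step for step. You check the Jacobi identity and $(\ad e)^2=\ad(e^{[2]})$ on each basis vector so that Jacobson's theorem applies, then read generation and the lower central series off the tables; the only thing left implicit is the Jacobi identity for $H_k$, which your own observations (that $w$ occurs only in $[x,y]$ and that $\spann{y_4,c}$ is central) already settle, since a nonzero Jacobiator summand with distinct entries would need inner pair $\{x,y\}$ and outer entry in $\{x,y\}$.
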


\begin{proof}
By Jacobson's theorem it suffices to check the Jacobi identity and
$(\ad e)^2=\ad(e^{[2]})$ for each basis vector $e$.

Let us begin with $L_k$. The nonzero adjoint values are
\[
\ad a\colon \ b\mapsto v,\ v\mapsto b_4,\qquad
\ad b\colon \ a\mapsto v,\ a_2\mapsto b_4,\qquad
\ad a_2\colon \ b\mapsto b_4,\qquad
\ad v\colon \ a\mapsto b_4 .
\]
Then $(\ad a)^2$ maps $b\mapsto v\mapsto b_4$ and kills every other basis
vector, which is $\ad a_2=\ad(a^{[2]})$. Each of $\ad b,\ad a_2,\ad v$ maps
the basis into $\spann{v,b_4}$, which it kills, so its square is zero,
matching $\ad(b^{[2]})=\ad b_2=0$, $\ad(a_2^{[2]})=\ad d=0$,
$\ad(v^{[2]})=0$. All remaining basis vectors are central with central
squares. For the Jacobi identity: every bracket lies in $\spann{v,b_4}$, the
vector $b_4$ is central, and $[v,e]$ is nonzero only for $e=a$. Since $v$
occurs in $[e_i,e_j]$ only for $\{e_i,e_j\}=\{a,b\}$, a nonzero summand
$[[e_i,e_j],e_l]$ of a Jacobiator with distinct entries would need
$\{e_i,e_j\}=\{a,b\}$ and $e_l=a$, which is impossible.

Let us consider now $H_k$. The nonzero adjoint values are
\[
\ad x\colon \ y\mapsto w,\ y_2\mapsto y_4,\ w\mapsto y_4+c,\qquad
\ad y\colon \ x\mapsto w,\ x_2\mapsto y_4+c,\ w\mapsto y_4,
\]
\[
\ad y_2\colon \ x\mapsto y_4,\qquad
\ad x_2\colon \ y\mapsto y_4+c,\qquad
\ad w\colon \ x\mapsto y_4+c,\ y\mapsto y_4 .
\]
Then $(\ad x)^2\colon y\mapsto y_4+c$ (all other basis vectors $\mapsto0$)
equals $\ad x_2$, and $(\ad y)^2\colon x\mapsto y_4$ equals $\ad y_2$. Each of
$\ad y_2,\ad x_2,\ad w$ maps the basis into the central space
$\spann{y_4,c}$, so its square is zero, matching $\ad y_4=\ad c=\ad 0=0$. For
the Jacobi identity: every bracket lies in $\spann{w,y_4,c}$ with
$\spann{y_4,c}$ central, $[w,e]\neq0$ only for $e\in\{x,y\}$, and $w$ occurs
only in $[x,y]$. A nonzero Jacobiator summand with distinct entries would need
inner pair $\{x,y\}$ and outer entry in $\{x,y\}$, which is impossible.

Generation is clear in both cases by the $2$-map and the brackets. The lower
central series are
\begin{align*}
	L_k&\supset\spann{v,b_4}\supset\spann{b_4}\supset0,  & H_k&\supset\spann{w,y_4,c}\supset\spann{y_4,c}\supset0. 
\end{align*}
\end{proof}

\begin{corollary}\label{cor:noniso2}
The restricted Lie algebras $L_k$ and $H_k$ are not isomorphic. In fact they
are not isomorphic even as Lie algebras, since
$\dim L_k'=2\neq3=\dim H_k'$.
\end{corollary}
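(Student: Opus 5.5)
The corollary follows directly from Lemma~\ref{lem:valid2}. The plan is to use an invariant that any isomorphism must preserve. Any isomorphism of restricted Lie algebras is in particular an isomorphism of the underlying Lie algebras, so it suffices to show that $L_k$ and $H_k$ are not isomorphic as Lie algebras.

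A Lie algebra isomorphism $\varphi\colon L_k\to H_k$ satisfies $\varphi([u,u'])=[\varphi(u),\varphi(u')]$. Hence it maps the set of brackets of $L_k$ onto the set of brackets of $H_k$. Since $\varphi$ is linear and bijective, it therefore maps the span of the brackets, the derived subalgebra $L_k'$, bijectively onto $H_k'$. This gives $\dim L_k'=\dim H_k'$.

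Lemma~\ref{lem:valid2} identifies these spaces explicitly as $L_k'=\spann{v,b_4}$ and $H_k'=\spann{w,y_4,c}$. It remains only to confirm their dimensions, which are $2$ and $3$ because the listed vectors are distinct members of the chosen bases and hence linearly independent. Since $2\neq 3$, no such $\varphi$ exists.

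There is no real obstacle here. The only content is the computation of the derived subalgebras, and that was already carried out in the proof of Lemma~\ref{lem:valid2}.
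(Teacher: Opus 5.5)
Your proposal is correct and follows the paper's argument: an isomorphism of restricted Lie algebras is in particular a Lie algebra isomorphism, so it maps $L_k'$ onto $H_k'$, and Lemma~\ref{lem:valid2} gives $\dim L_k'=2\neq3=\dim H_k'$. The paper gives no separate proof beyond the ``since'' clause in the statement, and your write-up fills in exactly those routine steps.
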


By Lemma~\ref{lem:pres}, the relations \eqref{eq:pres} hold in $u(H_k)$.
Inside $u(H_k)$ we may therefore write
\[
x^2=x_2,\qquad y^{2^j}=y_{2^j},\qquad x^4=x_2^{\,2}=c,
\qquad w=xy+yx,
\]
and we use this exponent notation from now on. All computations below take
place in $u(H_k)$. We define:
\[
\delta\coloneqq x^4+y^4,\qquad
g\coloneqq y^{2q-4}+y^{4q-8}.
\]

\begin{lemma}\label{lem:tool2}
In $u(H_k)$ the following hold:
\begin{enumerate}
\item $[x,y^2]=y^4$, $\ [x^2,y]=\delta$, $\ w^2=0$, $\ x^4=y^{2^k}$,
      $\ x^8=0$, and in particular $y^{4q}=0$;
\item $y^4$ and $x^4$, $g$ and $\delta$ are central. Moreover $\delta^2=y^8$;
\item $[x^2,y^2]=[x^2,w]=[y^2,w]=0$, $\ [x,w]=\delta$, $\ [y,w]=y^4$;
\item $g\,\delta=x^4$ and $x^4g^4=0$.
\end{enumerate}
\end{lemma}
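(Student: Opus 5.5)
Everything takes place in $u(H_k)$, so the plan is to translate each claim into the Lie data of $H_k$ through Lemma~\ref{lem:pres}. We use $x^2=x_2$, $y^{2^j}=y_{2^j}$, $x^4=c$, $y^{2^k}=y_{q}^{[2]}=c$, and $[r,s]$ equal to the Lie bracket for basis vectors. Part~(1) then follows directly from the table of $H_k$:
\begin{itemize}[label={}]
\item $[x,y^2]=[x,y_2]=y_4=y^4$ and $[x^2,y]=[x_2,y]=y_4+c=y^4+x^4=\delta$;
\item $w^2=w^{[2]}=0$ and $x^4=c=y^{2^k}$;
\item $x^8=c^2=c^{[2]}=0$, and hence $y^{4q}=y^{2^{k+1}}=c^2=0$.
\end{itemize}

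For part~(3), the brackets $[x_2,y_2]$, $[x_2,w]$ and $[y_2,w]$ all vanish in $H_k$, so these elements commute in $u(H_k)$. Likewise $[x,w]=y_4+c=\delta$ and $[y,w]=y_4$ are read off from the table.

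For part~(2), $y^4=y_4$ and $x^4=c$ are central in $H_k$, since $\spann{y_4,c}$ is central. The algebra $u(H_k)$ is generated by $H_k$, so an element of $H_k$ that is central in $H_k$ is central in $u(H_k)$. It follows that $\delta$ is central. The element $g$ is a sum of powers $y^{4m}$ with $m\ge1$: indeed $2q-4\ge4$ is divisible by $4$ because $q\ge4$. Each such power is a power of the central element $y^4$, so $g$ is central. Finally, $x^4$ and $y^4$ commute, so \eqref{eq:square} gives
\[
\delta^2=x^8+y^8=y^8 .
\]

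Part~(4) is the only step with real content. Using centrality and $\delta=x^4+y^4$, expand
\[
g\delta=(y^{2q-4}+y^{4q-8})(x^4+y^4).
\]
First compute $x^4y^{m}$ with $x^4=y^{2q}$, which holds because $2^k=2q$. This gives $x^4y^{2q-4}=y^{4q-4}$ and $x^4y^{4q-8}=y^{6q-8}$. The second term vanishes since $6q-8\ge4q$, using $q\ge4$ and $y^{4q}=0$. The $y^4$ terms give $y^{2q}+y^{4q-4}$. Summing everything in characteristic $2$,
\[
g\delta=y^{4q-4}+y^{2q}+y^{4q-4}=y^{2q}=x^4 .
\]

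For the last claim, note that $g^4=y^{8q-16}+y^{16q-32}$, because $g$ is a sum of commuting terms and we are in characteristic $2$. Then $x^4g^4=y^{2q}g^4$ has exponents at least $10q-16\ge4q$, again because $q\ge4$, so it vanishes. The only care needed is the exponent bookkeeping, in particular confirming $2^k=2q$ and that $q\ge4$ makes the relevant exponents reach $4q$.
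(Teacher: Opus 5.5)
Your proof is correct. Parts (1) and (4) coincide with the paper's argument, down to the exponent bookkeeping ($x^4=y^{2q}$, $y^{4q}=0$, $6q-8\ge 4q$, $10q-16\ge 4q$).

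For (2) and (3) you take a more direct route than the paper. You read the commutators, and the centrality of $y^4=y_4$ and $x^4=c$, straight off the bracket table of $H_k$. This uses two facts: commutators of basis vectors in $u(H_k)$ are their Lie brackets, and central elements of $H_k$ stay central in the algebra they generate. The paper instead rederives these facts inside the associative algebra from $[x,y^2]=y^4$ and $[x^2,y]=\delta$ by the derivation rule, for example $[x^2,y^2]=\delta y+y\delta$, $[x^2,w]=x\delta+\delta x$ and $[x,w]=x^2y+yx^2$.

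Both are valid, since the full bracket table is part of the presentation~\eqref{eq:pres}. Yours is shorter, and it is exactly the argument the paper itself uses in the odd-characteristic analogue, Lemma~\ref{lem:toolp}(1). The paper's version has the mild merit of showing that (2) and (3) already follow from the identities in (1) together with $w=xy+yx$.
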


\begin{proof}
(1) These are immediate from the defining relations~\eqref{eq:pres} of $u(H_k)$.

(2) $[y,y^4]=0$ and
$[x,y^4]=[x,y^2]\,y^2+y^2\,[x,y^2]=y^4y^2+y^2y^4=0$, so $y^4$ is central, since $x$ and $y$ generate $u(H_k)$. Then $[x,x^4]=0$ and
$[y,x^4]=[y,x^2]\,x^2+x^2\,[y,x^2]=\delta x^2+x^2\delta=0$, since $x^4$
commutes with $x^2$ and $y^4$ is central, $x^4$ is central. Products and
sums of central elements are central, and $g$ is a polynomial in $y^4$
because its exponents are divisible by $4$. Finally
$\delta^2=x^8+y^8+[x^4,y^4]=y^8$.

(3) $[x^2,y^2]=[x^2,y]\,y+y\,[x^2,y]=\delta y+y\delta=0$. Next
$[x^2,xy]=x\,[x^2,y]=x\delta$ and $[x^2,yx]=[x^2,y]\,x=\delta x$, so
$[x^2,w]=x\delta+\delta x=0$. Similarly $[y^2,xy]=[y^2,x]\,y=y^4y$ and
$[y^2,yx]=y\,[y^2,x]=y\,y^4$, so $[y^2,w]=0$. Finally
$[x,w]=x^2y+yx^2=[x^2,y]=\delta$ and $[y,w]=y^2x+xy^2=[x,y^2]=y^4$.

(4) Using $x^4=y^{2q}$ and $y^{4q}=0$,
\[
g\,\delta=(y^{2q-4}+y^{4q-8})(y^{2q}+y^{4})
= y^{4q-4}+y^{2q}+y^{6q-8}+y^{4q-4}
= y^{2q}=x^4,
\]
since the two terms $y^{4q-4}$ cancel and $6q-8\ge4q$ (as $q\ge4$). For the
second identity, $g^4=y^{8q-16}+y^{16q-32}$ by \eqref{eq:square}, and
$x^4g^4=y^{10q-16}+y^{18q-32}=0$ because $10q-16\ge4q$.
\end{proof}

\begin{theorem}\label{thm:char2}
For every $k\ge3$ the assignment
\[
\varphi(a)=X\coloneqq x,\qquad
\varphi(b)=Y\coloneqq x+y+x\,g
= x+y+x\bigl(y^{2^{k}-4}+y^{2^{k+1}-8}\bigr),
\]
extends to an isomorphism of $\Ft$-algebras
$\varphi\colon u(L_k)\to u(H_k)$.
\end{theorem}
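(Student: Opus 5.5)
The plan is to use the universal property of $u(L_k)$ to build $\varphi$, and then a dimension count to see that it is bijective. First I assign to every basis vector of $L_k$ an element of $u(H_k)$, compatible with the notation $X=x$ and $Y=x+y+xg$:
\[
a\mapsto X,\quad a_2\mapsto X^2,\quad d\mapsto X^4,\quad b\mapsto Y,\quad b_{2^j}\mapsto Y^{2^j}\ (1\le j\le k-1),\quad v\mapsto V\coloneqq XY+YX.
\]
By Lemma~\ref{lem:pres}, this extends to an algebra map $u(L_k)\to u(H_k)$ once the images satisfy the relations \eqref{eq:pres}. These relations are of three kinds:
\begin{itemize}
\item the squaring relations: $(X^4)^2=0$, $V^2=0$, $(Y^{q})^2=X^4$, and the tower relations, which hold by definition of the images;
\item the three nonzero brackets $[X,Y]=V$, $[X^2,Y]=Y^4$ and $[X,V]=Y^4$;
\item the vanishing of all other brackets between images of basis vectors.
\end{itemize}

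The key computations use Lemma~\ref{lem:tool2}, together with the fact that $g$ is central.
\begin{itemize}
\item Since $g$ is central, $[x,xg]=0$, $(xg)^2=x^2g^2$ and $[y,xg]=wg$.
\item By \eqref{eq:square}, $Y^2=x^2+y^2+x^2g^2+w(1+g)$.
\item The four summands of $Y^2$ pairwise commute by Lemma~\ref{lem:tool2}(3). Squaring again and using $w^2=0$ and $x^4g^4=0$ gives $Y^4=x^4+y^4=\delta$.
\item Consequently $Y^8=\delta^2=y^8$, so $Y^{2^j}=y^{2^j}$ for $j\ge3$. In particular $Y^{2^k}=x^4=X^4$ and $Y^{4q}=0$.
\item Next, $V=[x,y]+[x,xg]=w$, so $V^2=0$, and $[X,V]=[x,w]=\delta=Y^4$.
\item Also $[X^2,Y]=[x^2,y]+[x^2,x]g=\delta=Y^4$.
\item Finally $X^8=0$ and $(X^4)^2=0$.
\end{itemize}

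I expect the main obstacle, and the place where the correction term $xg$ is essential, to be the vanishing brackets $[X,Y^2]=0$ and $[Y,V]=0$. For the first,
\[
[X,Y^2]=[x,y^2]+[x,w](1+g)=y^4+\delta+g\delta=y^4+\delta+x^4=0,
\]
using $g\delta=x^4$ from Lemma~\ref{lem:tool2}(4). For the second,
\[
[Y,V]=[x,w]+[y,w]+g[x,w]=\delta+y^4+x^4=0.
\]
The remaining brackets then vanish because the relevant elements are central or commute. Indeed, $Y^4=\delta$ and all higher powers of $Y$ are central; $X^4$ is central; $[X^2,Y^2]=Y^4Y+YY^4=0$ since $Y^4$ is central; $[X^2,V]=[x^2,w]=0$; and $[Y^2,V]$ vanishes by expanding $Y^2$ and using Lemma~\ref{lem:tool2}(3). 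So $\varphi$ is a well-defined algebra homomorphism.

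For bijectivity I note that $\dim u(L_k)=\dim u(H_k)=2^n$, so it suffices to prove surjectivity. Let $I$ be the augmentation ideal of $u(H_k)$. Since $H_k$ is $2$-nilpotent, $I$ is nilpotent and $u(H_k)$ is local. Because $g\in I^4$, we have $Y\equiv x+y \pmod{I^2}$, so $X$ and $Y$ span $I/I^2$ together with the classes of $x$ and $y$. A standard nilpotent Nakayama argument then shows that $X$ and $Y$ generate $I$, and hence $u(H_k)$ (alternatively one can solve recursively, $y=Y+x+xg$ with $g$ rewritten in terms of $Y$). Therefore $\varphi$ is surjective, hence an isomorphism.
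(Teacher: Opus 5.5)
Your proof is correct. The homomorphism part coincides with the paper's: the same images of the basis of $L_k$, the same expansion $Y^2=x^2+y^2+x^2g^2+(1+g)w$, the same conclusions $Y^4=\delta$ and $Y^8=y^8$, and the same use of $g\delta=x^4$ to make $[X,Y^2]$ and $[Y,V]$ vanish.

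You depart from the paper only in the surjectivity step.

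\emph{The paper's route.} It argues explicitly. With $\eta=X+Y=y+xg$ it computes $\eta^4=X^4+Y^4=x^4+\delta=y^4$. Hence $g$, being a polynomial in $y^4$, lies in the image $B$, and then $y=\eta+xg\in B$. Your parenthetical alternative is essentially this argument, and indeed $y^4=X^4+Y^4$ can be read off directly.

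\emph{Your route.} You use three facts: the augmentation ideal $I$ is nilpotent; $I/I^2$ is spanned by the classes of $x$ and $y$; and $X\equiv x$, $Y\equiv x+y \pmod{I^2}$ because $xg\in I^5$. Nakayama for a nilpotent ideal then says any lift of a spanning set of $I/I^2$ generates $I$. This is valid and more conceptual. It shows that bijectivity is automatic for any assignment $a\mapsto x$, $b\mapsto x+y+(\text{terms in } I^2)$. So the correction term $xg$ is needed only to make the relations hold, never for surjectivity.

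\emph{What your route costs.} It needs an input the paper's argument avoids: that $I$ is nilpotent. This follows from $H_k$ being $2$-nilpotent, together with the standard fact that a finite-dimensional $p$-nilpotent $\mathfrak g$ has nilpotent augmentation ideal in $u(\mathfrak g)$. You assert the $2$-nilpotence of $H_k$ without proof. The paper verifies it only later, in the proof of Theorem~\ref{thm:final}, using the triangular ordered basis $c,y_q,\dots,y_4,w,x_2,y_2,y,x$. For completeness you should include that short check or cite the standard fact. The paper's explicit route, by contrast, is self-contained and uses only the identities of Lemma~\ref{lem:tool2}.
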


\begin{proof}
	We define the images of the basis vectors of $L_k$ as the following words in $X$ and $Y$:
\begin{gather*}
f_a\coloneqq X,\qquad f_b\coloneqq Y,\qquad f_{b_2}\coloneqq Y^2,\qquad f_{a_2}\coloneqq X^2,\qquad
f_v\coloneqq [X,Y],\\
f_{b_{2^j}}\coloneqq Y^{2^j}\ (2\le j\le k-1),\qquad f_d\coloneqq X^4 .
\end{gather*}
The assignment extends to an
algebra homomorphism $\varphi\colon u(L_k)\to u(H_k)$ with $\varphi(e)=f_e$
for every basis vector $e$ of $L_k$ as soon as the elements $f_e$ satisfy all
the relations \eqref{eq:pres} of $u(L_k)$. Let us verify them:

Let us begin by checking the power relations. By construction $f_a^{\,2}=f_{a_2}$,
$f_b^{\,2}=f_{b_2}$, $f_{b_2}^{\,2}=f_{b_4}$,
$f_{b_{2^j}}^{\,2}=f_{b_{2^{j+1}}}$ for $2\le j\le k-2$,
$f_{a_2}^{\,2}=f_d$ and $f_d^{\,2}=X^8=x^8=0$, matching the $2$-map of~$L_k$. There are two remaining power relations: $f_v^{\,2}=0$ and
$f_{b_q}^{\,2}=f_d$.

Since $g$ is central, $[x,xg]=g\,[x,x]=0$, so
\[
f_v=[X,Y]=[x,y]+[x,xg]=w,
\qquad\text{whence}\qquad
f_v^{\,2}=w^2=0.
\]

By \eqref{eq:square} applied to $Y=x+y+xg$: the
squares of the summands are $x^2$, $y^2$ and $(xg)^2=x^2g^2$, and the
cross-brackets are $[x,y]=w$, $[x,xg]=0$ and $[y,xg]=g\,[y,x]=gw$. Hence
\begin{equation}\label{eq:Ysq}
Y^2=x^2+y^2+x^2g^2+(1+g)\,w .
\end{equation}

The four summands of \eqref{eq:Ysq} commute
pairwise: $g$ is central and $[x^2,y^2]=[x^2,w]=[y^2,w]=0$. So by \eqref{eq:square} their squares add up:
\[
Y^4=x^4+y^4+x^4g^4+(1+g)^2w^2=\delta+x^4g^4=\delta.
\]
Thus $f_{b_4}=\delta$ and
$Y^8=\delta^2=y^8$, so that
\[
f_{b_{2^j}}=Y^{2^j}=\bigl(Y^8\bigr)^{2^{j-3}}=y^{2^j}\quad(3\le j\le k-1):
\]
all the elements $f_{b_4},\dots,f_{b_q},f_d=x^4$ are central in $u(H_k)$. The
missing power relation follows:
\[
f_{b_q}^{\,2}=Y^{2^k}=\bigl(Y^{8}\bigr)^{2^{k-3}}
=\bigl(y^{8}\bigr)^{2^{k-3}}=y^{2^k}=x^4=f_d.
\]

Let us proceed now to check the brackets. In $L_k$ every basis vector other than
$a,b,b_2,a_2,v$ is central, and the corresponding images
$f_{b_4},\dots,f_{b_q},f_d$ are central in $u(H_k)$, hence all relations~\eqref{eq:pres} involving them hold, both sides being zero. Moreover
$[f_a,f_b]=f_v$ holds by construction, and
$[f_a,f_{a_2}]=[x,x^2]=0$, $[f_b,f_{b_2}]=[Y,Y^2]=0$ trivially. Let us compute the remaining pairs:
\begin{align*}
[f_a,f_{b_2}]&=[x,Y^2]=[x,y^2]+g^2\,[x,x^2]+(1+g)\,[x,w]
=y^4+(1+g)\,\delta\\
&=(y^4+\delta)+g\delta=x^4+x^4=0=f_{[a,b_2]},\\[2pt]
[f_{a_2},f_b]&=[x^2,Y]=[x^2,x]+[x^2,y]+g\,[x^2,x]=\delta
=f_{b_4}=f_{[a_2,b]},\\[2pt]
[f_a,f_v]&=[x,w]=\delta=f_{b_4}=f_{[a,v]},\\[2pt]
[f_b,f_v]&=[Y,w]=[x,w]+[y,w]+g\,[x,w]=\delta+y^4+g\delta
=\delta+y^4+x^4=\delta+\delta=0=f_{[b,v]},\\[2pt]
[f_{b_2},f_{a_2}]&=[Y^2,x^2]=0,\qquad
[f_{b_2},f_v]=[Y^2,w]=0,\qquad
[f_{a_2},f_v]=[x^2,w]=0.
\end{align*} 
In each case the value agrees with
the image of the corresponding bracket of $L_k$. All relations~\eqref{eq:pres} of $u(L_k)$
hold, so $\varphi$ is a well-defined algebra homomorphism.

To see the bijectivity, by Lemma~\ref{lem:valid2} and the
Poincar\'e--Birkhoff--Witt theorem the two algebras have dimension $2^{\,n}$,
so it suffices to show that $\varphi$ is surjective. As $L_k$ is generated by~$a,b$, 
so is $u(L_k)$ as an associative algebra, hence the image of $\varphi$
is the subalgebra $B$ generated by~$X=x$ and $Y$, and it is enough to show
$x,y\in B$. Certainly $x=X\in B$, and, the characteristic being two,
\[
\eta\coloneqq X+Y=y+xg\in B .
\]
Moreover, $\eta^4=y^4$. Indeed $X^2=x^2$ and $[X,Y]=w$, so \eqref{eq:square} gives $\eta^2=X^2+Y^2+[X,Y]=x^2+Y^2+w$. Squaring
once more, the cross terms vanish, since $w^2=0$,
$[x^2,w]=0$, and $[x^2,Y^2]=[Y^2,w]=0$ were shown
among the bracket relations above, whence
\[
\eta^4=X^4+Y^4=x^4+\delta=y^4 ,
\]
using $Y^4=\delta=x^4+y^4$. Now $g=(y^4)^{m}+(y^4)^{2m}$ with
$m\coloneqq 2^{k-2}-1$ (because $2q-4=4m$ and $4q-8=8m$), so
$g=(\eta^4)^{m}+(\eta^4)^{2m}\in B$. Therefore $xg\in B$ and
$y=\eta+xg\in B$. Thus $B=u(H_k)$, and $\varphi$ is an isomorphism.
\end{proof}

\section{Odd characteristic}\label{sec:oddp}

Throughout this section $p$ is an odd prime and we work over the prime field
$\Fp$. The characteristic-two construction does not carry over verbatim:
the coefficient $2^{-1}$ below and the bound $(\ad y)^3=0$ used in
Lemma~\ref{lem:validp} are both special to odd $p$, just as the identities of
Lemma~\ref{lem:tool2} are special to $p=2$, but the mechanism is similar. We fix an integer $k\ge3$
and set
\[
n\coloneqq p+k+2,\qquad q\coloneqq p^{\,k-2}.
\]

Let $L_{p,k}$ be the $n$-dimensional restricted Lie algebra over $\Fp$ with
basis
\[
a,\ b,\ v_1,\ v_2,\ \dots,\ v_{p-1},\ a_p,\ b_p,\ b_{p^2},\ \dots,\
b_{p^{k-1}},\ d,
\]
$p$-map
\begin{gather*}
a^{[p]}=a_p,\quad a_p^{[p]}=d,\quad
b^{[p]}=b_p, \\ b_{p^j}^{[p]}=b_{p^{j+1}}\ (1\le j\le k-2), \quad
b_{p^{k-1}}^{[p]}=d, v_i^{[p]}=d^{[p]}=0,
\end{gather*}
and nonzero brackets
\[
[a,b]=v_1,\quad [a,v_i]=v_{i+1}\ (1\le i\le p-2),\quad
[a,v_{p-1}]=b_{p^2},\quad [a_p,b]=b_{p^2}.
\]
Let $H_{p,k}$ be the $n$-dimensional restricted Lie algebra over $\Fp$ with
basis
\[
x,\ y,\ u_1,\ u_2,\ \dots,\ u_{p-1},\ x_p,\ y_p,\ y_{p^2},\ \dots,\
y_{p^{k-1}},\ c,
\]
$p$-map
\begin{gather*}
x^{[p]}=x_p,\quad x_p^{[p]}=c,\quad
y^{[p]}=y_p, \\ y_{p^j}^{[p]}=y_{p^{j+1}}\ (1\le j\le k-2),\quad
y_{p^{k-1}}^{[p]}=c, u_i^{[p]}=c^{[p]}=0,
\end{gather*}
and nonzero brackets
\[
[x,y]=u_1,\quad [x,u_i]=u_{i+1}\ (1\le i\le p-2),\quad
[x,u_{p-1}]=y_{p^2},\quad [x_p,y]=y_{p^2},\quad [y,u_1]=c .
\]
For the remainder of this section we abbreviate $L\coloneqq L_{p,k}$ and
$H\coloneqq H_{p,k}$.

\begin{lemma}\label{lem:validp}
$L$ and $H$ are restricted Lie algebras of dimension $n=p+k+2$, generated by
$a,b$ and by $x,y$ respectively. Furthermore $\dim L'=p$ and
$\dim H'=p+1$.
\end{lemma}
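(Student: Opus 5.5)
We follow the pattern of Lemma~\ref{lem:valid2}. By Jacobson's theorem it suffices to check, for each algebra, two things: the Jacobi identity for the given brackets, and $(\ad e)^p=\ad(e^{[p]})$ for every basis vector $e$. The dimension count $2+(p-1)+1+(k-1)+1=p+k+2$ is immediate. Generation also follows directly, since iterating $\ad a$ on $b$ gives $v_1,\dots,v_{p-1},b_{p^2}$, and the $p$-map gives the towers; the same holds for $H$.

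For $L$ we first list the nonzero adjoint values. The map $\ad a$ sends $b\mapsto v_1\mapsto v_2\mapsto\cdots\mapsto v_{p-1}\mapsto b_{p^2}$. The map $\ad b$ sends $a\mapsto -v_1$ and $a_p\mapsto -b_{p^2}$. The map $\ad a_p$ sends $b\mapsto b_{p^2}$, and $\ad v_i$ sends $a\mapsto -v_{i+1}$ (or $-b_{p^2}$). Hence $(\ad a)^p$ sends $b\mapsto b_{p^2}$ and kills everything else, which is exactly $\ad a_p$. Every other $\ad e$ maps $L$ into $L'=\spann{v_1,\dots,v_{p-1},b_{p^2}}$. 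The only elements of $L'$ with nonzero brackets are the $v_i$, and these only bracket nontrivially with $a$, which lies outside the image. So $(\ad e)^2=0$, and in particular $(\ad e)^p=0$, matching the fact that $\ad b_p=\ad b_{p^2}=\ad d=0$.

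For Jacobi, note that $v_i$ appears as a bracket only in $[a,b]$ or $[a,v_{i-1}]$. A nonzero double bracket $[[e_i,e_j],e_l]$ therefore needs $e_l=a$ and $\{e_i,e_j\}\ni a$. A Jacobiator with two equal entries $a,a,z$ vanishes by antisymmetry, and the Jacobi identity is trilinear and alternating, so it suffices to check distinct triples, where this situation cannot occur.

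For $H$ the new feature is $[y,u_1]=c$. Now $\ad x$ acts exactly as $\ad a$ did, but it also kills $c$, so $(\ad x)^p$ sends $y\mapsto y_{p^2}$ and matches $\ad x_p$. The map $\ad y$ sends $x\mapsto -u_1$, $x_p\mapsto -y_{p^2}$, and $u_1\mapsto c$. Thus $(\ad y)^2$ sends $x\mapsto -c$ and $(\ad y)^3=0$. Since $p\ge3$, this gives $(\ad y)^p=0=\ad y_p$; this is where oddness of $p$ is used. The map $\ad u_1$ sends $x\mapsto -u_2$ and $y\mapsto -c$. Its square vanishes because $u_2$ and $c$ are killed by $\ad u_1$ (note $[u_1,u_2]=0$). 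The remaining $\ad u_i$ and $\ad x_p$ also have square zero, and the other basis elements are central.

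The Jacobi identity for $H$ is the main obstacle, because now $u_1$ brackets nontrivially with both $x$ and $y$. We check the distinct triples that could contribute. For $(x,y,u_1)$ we get
\[
[[x,y],u_1]+[[y,u_1],x]+[[u_1,x],y]=[u_1,u_1]+[c,x]+[-u_2,y]=0,
\]
since $[u_2,y]=0$. For $(x,y,u_i)$ with $i\ge2$, the only possibly nonzero term is $[[u_i,x],y]=-[u_{i+1},y]$, which is $0$ because $u_{i+1}\ne u_1$. A triple $(x,y,x_p)$ yields $[[x,y],x_p]+[[y,x_p],x]+[[x_p,x],y]=0+[-y_{p^2},x]+0=0$, since $y_{p^2}$ is central. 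Triples not containing $x$ give only brackets landing in the center. Any other triple is handled by the earlier rule: $[[e_i,e_j],e_l]$ can be nonzero only when $e_l\in\{x,y\}$ and the inner bracket is some $u_i$.

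Finally, $L'$ is spanned by $v_1,\dots,v_{p-1},b_{p^2}$, which are linearly independent basis vectors, so $\dim L'=p$. Likewise $H'$ is spanned by $u_1,\dots,u_{p-1},y_{p^2},c$, so $\dim H'=p+1$.
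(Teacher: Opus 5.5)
Your proposal is correct and follows essentially the same route as the paper: Jacobson's theorem, the adjoint tables, $(\ad x)^p=\ad x_p$ with the remaining adjoints nilpotent of order at most $3$ (using $p\ge 3$ for $\ad y$), Jacobi checks on distinct basis triples, and reading off $L'$ and $H'$. The only differences are that you treat $L$ first instead of obtaining it from $H$ by deleting $c$, and one wording slip: in showing $(\ad e)^2=0$ for $e\neq a$ in $L$, the relevant point is that $e\neq a$, so $\ad e$ kills $L'$, not that $a$ lies outside the image.
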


\begin{proof}
The nonzero adjoint values are
\[
\ad x\colon \ y\mapsto u_1,\ u_i\mapsto u_{i+1}\ (i\le p-2),\ u_{p-1}\mapsto y_{p^2},
\qquad
\ad y\colon\ x\mapsto -u_1,\ x_p\mapsto -y_{p^2},\ u_1\mapsto c,
\]
\begin{gather*}
\ad u_1\colon\ x\mapsto -u_2,\ y\mapsto -c,\qquad
\ad u_i\colon\ x\mapsto -u_{i+1}\ (2\le i\le p-2),\\
\ad u_{p-1}\colon\ x\mapsto -y_{p^2},\qquad
\ad x_p\colon\ y\mapsto y_{p^2}.
\end{gather*}
Then $(\ad x)^p$ sends
$y\mapsto u_1\mapsto\cdots\mapsto u_{p-1}\mapsto y_{p^2}$ ($p$ steps) and
kills all other basis vectors, which is exactly $\ad x_p=\ad(x^{[p]})$. Each
of $\ad y$, $\ad u_i$, $\ad x_p$ maps the basis into
$\spann{u_1,\dots,u_{p-1},y_{p^2},c}$, and its second iterate already lands
in the central space $\spann{y_{p^2},c}$
($(\ad y)^2(x)=-[y,u_1]=-c$), so its third iterate vanishes. As $p\ge3$, all
the required $p$-th iterates are zero, matching
$\ad(y^{[p]})=\ad(u_i^{[p]})=0$ and $\ad(x_p^{[p]})=\ad c=0$. The chain
elements $y_{p^j},c$ are central with central $p$-images.

To check the Jacobi identity, let us note that every bracket lies in
$V=\spann{u_1,\dots,u_{p-1},y_{p^2},c}$, where $\spann{y_{p^2},c}$ is
central. The only elements of $V$ with nonzero adjoint are the $u_i$, which
act only on $x$ (and $u_1$ also on $y$), with central values
$u_{i+1},y_{p^2},c\in V$. A Jacobiator summand $[[e_i,e_j],e_l]$ with distinct
entries is therefore nonzero only if $[e_i,e_j]$ has a $u_t$-component and
$e_l\in\{x,y\}$. Checking the three cyclic summands for the triples
$\{x,y,u_i\}$, $\{x,u_i,u_j\}$, $\{y,u_i,u_j\}$, $\{x,y,x_p\}$,
$\{x,u_i,x_p\}$, $\{y,u_i,x_p\}$ gives in each case a sum of brackets of the
forms $[u_s,u_t]$, $[c,\,\cdot\,]$, $[y_{p^2},\,\cdot\,]$, $[y,u_s]$ with
$s\ge2$ all zero.

The verification above applies verbatim for $L$, with every occurrence of $c$
in the adjoint values deleted and with the Jacobi checks only losing
summands.

To conclude, we have $u_1=[x,y]$,
$u_{i+1}=[x,u_i]$, $x_p=x^{[p]}$, $y_p=y^{[p]}$, the $y$-chain by iterated
$p$-powers, and $c=x_p^{[p]}$, and likewise in $L$. Finally
$L'=\spann{v_1,\dots,v_{p-1},b_{p^2}}$ has dimension $p$, while in $H$ the
bracket $[y,u_1]=c$ adds the top:
$H'=\spann{u_1,\dots,u_{p-1},y_{p^2},c}$ has dimension $p+1$.
\end{proof}

\begin{corollary}\label{cor:nonisop}
$L\not\cong H$ as restricted Lie algebras, since
$\dim L'=p\neq p+1=\dim H'$.
\end{corollary}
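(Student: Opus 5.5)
The corollary is immediate from Lemma~\ref{lem:validp}. The plan is to observe that a restricted Lie algebra isomorphism is in particular a Lie algebra isomorphism, so it maps the derived subalgebra onto the derived subalgebra. Concretely, suppose $\psi\colon L\to H$ is an isomorphism of restricted Lie algebras. Then $\psi([e,e'])=[\psi(e),\psi(e')]$ for all $e,e'\in L$. Hence $\psi(L')$ is the span of all brackets $[\psi(e),\psi(e')]$. Since $\psi$ is surjective, this span is exactly $H'$. Because $\psi$ is a linear bijection, we get $\dim L'=\dim H'$.

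Lemma~\ref{lem:validp} gives $\dim L'=p$ and $\dim H'=p+1$, and these differ, so no such $\psi$ exists. As in Corollary~\ref{cor:noniso2}, the same argument shows that $L$ and $H$ are not even isomorphic as ordinary Lie algebras, since the $p$-maps play no role.

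There is no real obstacle here. The only content is the dimension count of the derived subalgebras, which is already established in Lemma~\ref{lem:validp}. That count reflects the single extra bracket $[y,u_1]=c$ in $H$, which puts the top element $c$ into $H'$, whereas $d\notin L'$.
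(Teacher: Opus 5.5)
Your proposal is correct and follows the same route as the paper: an isomorphism of restricted Lie algebras is in particular a Lie algebra isomorphism, so it carries $L'$ onto $H'$. The counts $\dim L'=p$ and $\dim H'=p+1$ from Lemma~\ref{lem:validp} therefore rule one out, and, as you note, they even rule out an ordinary Lie isomorphism.
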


We now produce the isomorphism. All computations take place in $u(H)$. By
Lemma~\ref{lem:pres} we may use freely $x^p=x_p$,
$x_p^{\,p}=c$, $y^p=y_p$, $y_{p^j}^{\,p}=y_{p^{j+1}}$, $y_{p^{k-1}}^{\,p}=c$,
$c^p=0$ and $u_i^{\,p}=0$. Iterating $y_{p^j}^{\,p}=y_{p^{j+1}}$ along
the chain gives $y_{p^2}^{\,p^j}=y_{p^{2+j}}$ for $0\le j\le k-2$, $y_{p^k}\coloneqq c$. This yields
\begin{equation}\label{eq:tower}
y_{p^2}^{\,q}=c,\qquad y_{p^2}^{\,pq}=c^{\,p}=0,
\qquad\text{hence}\qquad
y_{p^2}^{\,m}=0\ \text{ for every } m\ge pq .
\end{equation}

\begin{lemma}\label{lem:toolp}
In $u(H)$, with $\ t\coloneqq x_p y_{p^2}^{\,q-1}$ and
$\alpha\coloneqq 2^{-1}x_p^{\,2}\,y_{p^2}^{\,q-2}$:
\begin{enumerate}
\item $y_p,y_{p^2},\dots,y_{p^{k-1}},c$ are central, $x_p$ commutes with $x$,
      all $u_i$ and with all central elements, and $[x_p,y]=y_{p^2}$;
\item $[\alpha,y]=t$ and $[t,y]=c$;
\item $\alpha$ and $t$ commute with $x$, all $u_i$ and each other, and
      $\alpha^p=t^p=0$.
\end{enumerate}
\end{lemma}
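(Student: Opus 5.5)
The plan is to verify each item directly from the defining relations \eqref{eq:pres} of $u(H)$ (Lemma~\ref{lem:pres}). The key idea is to express everything in terms of $x_p$ and the central element $y_{p^2}$, and then to use the tower identity \eqref{eq:tower} to kill high powers. The main obstacle is not conceptual; it is keeping track of the exponent bookkeeping in the $p$-th power computations, which is where \eqref{eq:tower} is used.

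For (1): in the Lie algebra $H$ the elements $y_p,y_{p^2},\dots,y_{p^{k-1}},c$ have no nonzero brackets with any basis vector. Hence they commute with the generators $x,y$ of $u(H)$, and so they are central in $u(H)$. Likewise, the only nonzero bracket involving $x_p$ in $H$ is $[x_p,y]=y_{p^2}$. By \eqref{eq:pres}, $x_p$ therefore commutes with $x$, with every $u_i$ and with all the central elements just listed, and $[x_p,y]=y_{p^2}$ holds in $u(H)$.

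For (2): since $y_{p^2}$ is central, $[\alpha,y]=2^{-1}[x_p^2,y]\,y_{p^2}^{\,q-2}$. The derivation rule gives $[x_p^2,y]=x_py_{p^2}+y_{p^2}x_p=2x_py_{p^2}$, again using centrality of $y_{p^2}$. Hence $[\alpha,y]=x_py_{p^2}^{\,q-1}=t$. This needs $q\ge2$, which holds since $q=p^{k-2}\ge p\ge3$. Similarly $[t,y]=[x_p,y]\,y_{p^2}^{\,q-1}=y_{p^2}^{\,q}=c$ by \eqref{eq:tower}.

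For (3): both $\alpha$ and $t$ are products of a power of $x_p$ with central elements. By (1) they commute with $x$, with all $u_i$, and with each other. For the $p$-th powers, all the factors commute, so $(rs)^p=r^ps^p$, and I would use $x_p^{\,p}=c=y_{p^2}^{\,q}$. This gives
\[
t^p=x_p^{\,p}\,y_{p^2}^{\,p(q-1)}=y_{p^2}^{\,pq+q-p},
\qquad
\alpha^p=2^{-p}x_p^{\,2p}\,y_{p^2}^{\,p(q-2)}=2^{-p}y_{p^2}^{\,pq+2q-2p}.
\]
Since $q\ge p$, both exponents are at least $pq$. By \eqref{eq:tower} both powers therefore vanish.
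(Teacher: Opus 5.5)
Your proof is correct and follows the paper's argument step by step. The centrality in (1) is read off the bracket table of $H$, (2) is the derivation rule for $[x_p^{\,2},y]$ with $y_{p^2}$ central, and in (3) the exponents $pq+q-p$ and $pq+2q-2p$ are at least $pq$ because $q\ge p$, so both powers vanish by \eqref{eq:tower}. Your justification of the $p$-th powers via $(rs)^p=r^ps^p$ for commuting factors is, if anything, slightly more precise than the paper's appeal to \eqref{eq:square}.
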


\begin{proof}
(1) These are read off the bracket table of $H$. Since
$H$ generates $u(H)$, an element central in $H$ is central in $u(H)$.

(2) By direct computations using centrality,
\[
[\alpha,y]=2^{-1}y_{p^2}^{\,q-2}\,[x_p^{\,2},y]
=2^{-1}y_{p^2}^{\,q-2}\bigl(x_p[x_p,y]+[x_p,y]x_p\bigr)
=2^{-1}y_{p^2}^{\,q-2}\cdot2\,x_p\,y_{p^2}=x_p\,y_{p^2}^{\,q-1}=t,
\]
using that $[x_p,y]=y_{p^2}$ is central. Likewise $[t,y]=[x_p,y]\,y_{p^2}^{\,q-1}
=y_{p^2}\, y_{p^2}^{\,q-1}=y_{p^2}^{\,q}=c$.

(3) Commutation is clear, as $\alpha$ and $t$ are polynomials in
$x_p$ and central elements. For the $p$-th powers, $x_p$ commutes with the
central factors, so by \eqref{eq:square}
$t^p=x_p^{\,p}\,y_{p^2}^{\,p(q-1)}=c\,y_{p^2}^{\,pq-p}
=y_{p^2}^{\,q+pq-p}=0$ and
$\alpha^p=2^{-p}x_p^{\,2p}\,y_{p^2}^{\,p(q-2)}=2^{-p}c^2\,y_{p^2}^{\,pq-2p}
=2^{-p}\,y_{p^2}^{\,2q+pq-2p}=0$, since $q\ge p$ makes both exponents
greater or equal than $pq$.
\end{proof}

\begin{theorem}\label{thm:oddp}
For every odd prime $p$ and every $k\ge3$ the assignment
\[
\varphi(a)=X\coloneqq  x + \alpha = x+2^{-1}x_p^{\,2}\,y_{p^2}^{\,p^{k-2}-2},\qquad
\varphi(b)=Y\coloneqq y
\]
extends to an isomorphism of $\Fp$-algebras
$\varphi\colon u(L)\to u(H)$.
\end{theorem}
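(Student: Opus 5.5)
Following the proof of Theorem~\ref{thm:char2}, define images of the basis of $L$ as words in $X,Y$:
\[
f_a=X,\quad f_b=Y,\quad f_{v_1}=[X,Y],\quad f_{v_{i+1}}=[X,f_{v_i}],\quad f_{a_p}=X^p,\quad f_d=X^{p^2},\quad f_{b_{p^j}}=Y^{p^j}.
\]
By Lemma~\ref{lem:pres} it suffices to verify the relations \eqref{eq:pres} of $u(L)$ for the $f_e$. Surjectivity then gives bijectivity, since both algebras have dimension $p^n$. For surjectivity, note that $Y=y$ lies in the image. Also $X^p=x_p$, which is in the image; from it one recovers $y_{p^2}$ and the central powers, hence $\alpha$, and hence $x=X-\alpha$.

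\emph{Computing the images.} The key facts come from Lemma~\ref{lem:toolp}: $\alpha$ commutes with $x$ and with all $u_i$, and $\alpha^p=0$. Then \eqref{eq:square} gives
\[
X^p=x^p+\alpha^p=x_p,\qquad X^{p^2}=c=f_d,\qquad f_d^p=0.
\]
Hence $f_{a_p}=x_p$ and $[f_{a_p},f_b]=[x_p,y]=y_{p^2}=f_{b_{p^2}}$. The $y$-tower relations hold trivially, with $Y^{p^k}=c=f_d$.

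For the brackets, $[X,Y]=u_1+t$. Since $t$ commutes with $x$, induction gives $f_{v_i}=u_i$ for $i\ge2$, and
\[
[X,f_{v_{p-1}}]=[x+\alpha,u_{p-1}]=y_{p^2}.
\]
We must have $f_{v_1}^p=0$. Because $u_1$ and $t$ commute, \eqref{eq:square} gives $(u_1+t)^p=u_1^p+t^p=0$.

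\emph{The crucial cancellation.} The relation that justifies the correction term is
\[
[f_b,f_{v_1}]=[y,u_1+t]=c-c=0,
\]
which uses $[t,y]=c$ from Lemma~\ref{lem:toolp}(2). This is exactly how the extra derived element $c$ in $H$ is absorbed.

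\emph{Remaining relations.} We still need $[f_b,f_{v_i}]=[y,u_i]=0$ for $i\ge2$, and $[f_{v_i},f_{v_j}]=0$; the latter holds because the $u_i$ commute with each other and with $t$. We need $[f_{a_p},f_{v_i}]=0$, which follows since $x_p$ commutes with $t$ and the $u_i$. We also need $[f_a,f_{a_p}]=[x+\alpha,x_p]=0$. Finally, the $f_{b_{p^j}}$ and $f_d$ are central in $u(H)$, so every relation involving them holds.

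The main obstacle is bookkeeping in $u(H)$ rather than in $H$. Brackets such as $[y,u_i]$ and $[x,u_{p-1}]$ must be computed as commutators in the enveloping algebra. This is fine because $H$ embeds in $u(H)$ with its bracket equal to the commutator. We must also ensure that the terms $\alpha$, $t$ contribute nothing beyond the single cancellation above. This relies on $q\ge p$, which makes the high powers of $y_{p^2}$ vanish by \eqref{eq:tower}, as already exploited in Lemma~\ref{lem:toolp}(3).
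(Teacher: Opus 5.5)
Your proposal is correct and matches the paper's proof step for step. It uses the same words $f_e$ in $X,Y$, the same evaluations $X^p=x_p$ and $[X,Y]=u_1+t$, the key cancellation $[y,u_1+t]=c-c=0$, and the same surjectivity argument recovering $x_p$, $y_{p^2}$, $\alpha$ and then $x=X-\alpha$. You leave two points implicit, namely $f_{v_i}^{\,p}=u_i^{\,p}=0$ for $i\ge2$ and the use of $[\alpha,u_1]=[\alpha,t]=0$ in obtaining $f_{v_2}=u_2$; both follow at once from the $p$-map of $H$ and Lemma~\ref{lem:toolp}(3).
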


\begin{proof}
Write $X=x+\alpha$ with $\alpha$ as in Lemma~\ref{lem:toolp}. Define images
for the whole basis of $L$ as the corresponding words in $X$ and $Y$:
\begin{gather*}
f_a\coloneqq X,\qquad f_b\coloneqq Y,\qquad f_{v_1}\coloneqq [X,Y],\qquad f_{v_{i+1}}\coloneqq [X,f_{v_i}],\\
f_{a_p}\coloneqq X^p,\qquad f_{b_p}\coloneqq Y^p,\qquad
f_{b_{p^{j+1}}}\coloneqq f_{b_{p^j}}^{\,p},\qquad f_d\coloneqq f_{a_p}^{\,p}.
\end{gather*}
By Lemma~\ref{lem:pres}, applied to $L$, it suffices to check that these
elements satisfy all relations~\eqref{eq:pres} of~$u(L)$.

We begin by computing the explicit values of the images of the elements of $L$. 
\begin{align*}
f_{v_1}&=[x+\alpha,y]=u_1+[\alpha,y]=u_1+t, \\[2pt] 
f_{v_2}&=[X,u_1+t]=[x,u_1]=u_2,\\[2pt]
f_{v_{i+1}}&=[X,u_i]=[x,u_i]=u_{i+1}\ \ (2\le i\le p-2).
\end{align*}
Since $x$ and $\alpha$ commute and $\alpha^p=0$, we get
$X^p=x^p+\alpha^p=x_p$, so $f_{a_p}=x_p$ and
$f_d=x_p^{\,p}=c$. Similarly $f_{b_{p^j}}=y_{p^j}$ for all $j$, and the two
routes to the top agree:
$f_{a_p}^{\,p}=c=f_{b_{p^{k-1}}}^{\,p}$, matching
$a_p^{[p]}=d=b_{p^{k-1}}^{[p]}$ in $L$.

To check the brackets, we begin by noticing that the images of the chain elements $b_{p^j},d$
of $L$ are the central elements $y_{p^j},c$ of $u(H)$, so every bracket
relation involving them holds, both sides being zero. The remaining ones:
\begin{align*}
[f_a,f_{v_i}]&=f_{v_{i+1}}\ (i<p-1) \\[2pt]
[f_a,f_{v_{p-1}}]&=[x+\alpha,u_{p-1}]=y_{p^2}=f_{b_{p^2}} \\[2pt]
[f_{a_p},f_b]&=[x_p,y]=y_{p^2}=f_{b_{p^2}} \\[2pt]
[f_a,f_{a_p}]&=[x+\alpha,x_p]=0 \\[2pt]
[f_{v_1},f_{v_j}]&=[u_1+t,u_j]=0, [f_{v_1},f_{a_p}]=[u_1+t,x_p]=0 \\[2pt]
[f_{v_i},f_{v_j}]&=[f_{v_i},f_{a_p}]=0\ (i,j\ge2) \\[2pt]
[f_b,f_{v_1}]&=[y,\,u_1+t]=[y,u_1]+[y,t]=c+(-c)=0 \\[2pt]
[f_b,f_{v_i}]&=[y,u_i]=0\ (i\ge2)
\end{align*}
In each case the value coincides with the image of the corresponding bracket
of $L$. 

Finally, the power relations follow automatically: $f_a^{\,p}=X^p=x_p=f_{a_p}$ and
$f_b^{\,p}=y^p=y_p=f_{b_p}$, while the chains
$f_{b_{p^j}}^{\,p}=f_{b_{p^{j+1}}}$, $f_{a_p}^{\,p}=f_d$ and
$f_d^{\,p}=c^p=0$ match
$d^{[p]}=0$. For $i\ge2$, $f_{v_i}^{\,p}=u_i^{\,p}=0$. Lastly, $u_1$ and $t$
commute, so $f_{v_1}^{\,p}=(u_1+t)^p=u_1^{\,p}+t^p=0$, matching $v_1^{[p]}=0$ in $L$.

All relations \eqref{eq:pres} of $u(L)$ hold, so $\varphi$ is a well-defined
algebra homomorphism.

As before, since both algebras have dimension $p^{\,n}$, it suffices to show that $\varphi$ is surjective. 
Since $L$ is generated by $a,b$,
the image of $\varphi$ is the subalgebra $B$ generated by $X=x+\alpha$ and
$Y=y$, and it is enough to show $x,y\in B$. Trivially $y=Y\in B$, whence
$y_{p^2}=y^{p^2}=Y^{p^2}\in B$. Since $x$ and $\alpha$ commute and
$\alpha^p=0$, we have that 
$x_p=x^p=(x+\alpha)^p=X^p\in B$. Therefore
$\alpha=2^{-1}x_p^{\,2}\,y_{p^2}^{\,q-2}$, a polynomial in $x_p$ and
$y_{p^2}$, lies in $B$, and so does $x=X-\alpha$. Thus $B=u(H)$, and
$\varphi$ is an isomorphism.
\end{proof}

\section{Arbitrary fields}\label{sec:fields}

In order to generalise the examples to any field, we need only the restricted-Lie analogue of the classical
isomorphisms $U(\mathfrak g\otimes_kK)\cong U(\mathfrak g)\otimes_kK$
(see~\cite{SF}). It follows from the universal
property of $u$ together with the Poincar\'e--Birkhoff--Witt theorem.

\begin{proposition}\label{prop:base}
Let $M$ be a finite-dimensional restricted Lie algebra over $\Fp$ and $F$ a
field of characteristic $p$. Then $M\otimes_{\Fp}F$ carries a unique
restricted structure extending that of $M$, the natural map
$u(M)\otimes_{\Fp}F\to u(M\otimes_{\Fp}F)$ is an isomorphism of $F$-algebras,
and $(M\otimes_{\Fp}F)'=M'\otimes_{\Fp}F$.
\end{proposition}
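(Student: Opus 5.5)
We prove the three assertions in order: the restricted structure on $M_F\coloneqq M\otimes_{\Fp}F$, then the isomorphism of enveloping algebras, then the derived subalgebra. Fix an $\Fp$-basis $e_1,\dots,e_n$ of $M$. Then $e_i\otimes1$ is an $F$-basis of $M_F$, and $M_F$ carries the $F$-bilinear extension of the bracket of $M$, which is again a Lie algebra. For the $p$-map we use Jacobson's theorem. Set $h_i\coloneqq e_i^{[p]}\otimes1$. The identity $\ad(e_i^{[p]})=(\ad e_i)^p$ holds on $M$, and both sides are $F$-linear extensions of $\Fp$-linear maps, so it still holds on $M_F$. Jacobson's theorem therefore gives a unique $p$-map on $M_F$ with $(e_i\otimes1)^{[p]}=h_i$.

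This $p$-map extends that of $M$. Indeed, the map $m\mapsto(m\otimes1)^{[p]}$ and the map $m\mapsto m^{[p]}\otimes1$ both satisfy Jacobson's formula~\eqref{eq:jacobsonformula} and the rule $(\lambda m)^{[p]}=\lambda^pm^{[p]}$ for $\lambda\in\Fp$. The polynomials $s_i$ are brackets, and the bracket is compatible with $m\mapsto m\otimes1$. The two maps agree on the basis, so they agree on all of $M$. Uniqueness follows the same way: any $p$-map on $M_F$ extending that of $M$ has the prescribed values on the basis, so by the uniqueness in Jacobson's theorem it is the one above.

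For the enveloping algebras, we first build the natural map. The map $M\to u(M_F)$, $m\mapsto m\otimes1$, is a homomorphism of restricted Lie algebras over $\Fp$. By the universal property it induces an $\Fp$-algebra map $u(M)\to u(M_F)$. Extending scalars gives an $F$-algebra map $\psi\colon u(M)\otimes_{\Fp}F\to u(M_F)$. By the Poincar\'e--Birkhoff--Witt theorem, the ordered restricted monomials in the $e_i$ form an $\Fp$-basis of $u(M)$, so their images form an $F$-basis of the source. Again by Poincar\'e--Birkhoff--Witt, the same monomials in the $e_i\otimes1$ form an $F$-basis of $u(M_F)$. The map $\psi$ sends the first basis bijectively onto the second, so it is an isomorphism.

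A second route is via Lemma~\ref{lem:pres}. The presentation~\eqref{eq:pres} of $u(M_F)$ has generators $e_i\otimes1$ and structure constants in $\Fp$. It is therefore the scalar extension of the presentation of $u(M)$, and extending scalars commutes with taking quotients of free algebras.

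Finally, $(M_F)'$ is the $F$-span of the brackets $[e_i\otimes1,e_j\otimes1]=[e_i,e_j]\otimes1$, by bilinearity. This span is exactly $M'\otimes_{\Fp}F$. The only point needing any care is the first step, namely checking that the $p$-map on $M_F$ restricts to the original $p$-map on all of $M$ and not just on the basis. Because the $s_i$ are Lie polynomials with coefficients in $\Fp$, this reduces to the formal check carried out above.
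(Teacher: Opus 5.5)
Your proof is correct and takes the route the paper indicates. The paper only sketches this, citing \cite{SF}: the universal property of $u$ gives the natural map, and Poincar\'e--Birkhoff--Witt shows it carries a basis onto a basis. The $p$-map on $M\otimes_{\Fp}F$ via Jacobson's theorem, the check that it restricts to the original $p$-map on all of $M$, and the bilinearity argument for the derived subalgebra are the routine steps the paper leaves implicit, and you carry them out correctly.
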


We are now ready to prove the main theorem.
\begin{theorem}\label{thm:final}
Let $F$ be a field of characteristic $p>0$. For every integer $n\ge p+5$
there exist $p$-nilpotent restricted Lie algebras $L$ and $H$ over $F$ with
$\dim L=\dim H=n$, such that:
\[
u(L)\cong u(H)\ \text{ as $F$-algebras},\qquad
\dim L'=p\neq p+1=\dim H' .
\]
\end{theorem}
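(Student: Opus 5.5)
The plan is to take the prime-field examples of Sections~\ref{sec:char2} and~\ref{sec:oddp}, which already cover every dimension $n\ge p+5$, and transport them to $F$ by Proposition~\ref{prop:base}. First I will match the dimensions. For $p=2$, the algebras $L_k,H_k$ of Section~\ref{sec:char2} have dimension $n=k+4$ with $k\ge3$, so every $n\ge7=p+5$ occurs. By Lemma~\ref{lem:valid2} they satisfy $\dim L_k'=2=p$ and $\dim H_k'=3=p+1$. For odd $p$, the algebras $L_{p,k},H_{p,k}$ have dimension $n=p+k+2$ with $k\ge3$, so every $n\ge p+5$ occurs. By Lemma~\ref{lem:validp} they satisfy $\dim L'=p$ and $\dim H'=p+1$. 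In each case I fix $k=n-4$ or $k=n-p-2$ and call the resulting pair $M_1,M_2$ over $\Fp$. Theorems~\ref{thm:char2} and~\ref{thm:oddp} then give an $\Fp$-algebra isomorphism $u(M_1)\cong u(M_2)$.

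Next I set $L\coloneqq M_1\otimes_{\Fp}F$ and $H\coloneqq M_2\otimes_{\Fp}F$ with the restricted structures given by Proposition~\ref{prop:base}. Tensoring the isomorphism with $F$ gives
\[
u(L)\cong u(M_1)\otimes_{\Fp}F\cong u(M_2)\otimes_{\Fp}F\cong u(H)
\]
as $F$-algebras. The same proposition gives $\dim_F L'=\dim_{\Fp}M_1'=p$ and $\dim_F H'=p+1$, and clearly $\dim_F L=\dim_F H=n$.

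It remains to check $p$-nilpotency over $F$. This is the step needing the most care, because $p$-nilpotency concerns every element, not only the basis vectors, and over $F$ there are many more elements. The basis vectors are $p$-nilpotent by inspection: each $p$-power chain ends at $d$ (resp.\ $c$) with $d^{[p]}=0$, and the $v_i$, $u_i$, $w$ have zero $p$-th power. I will argue inside $u(H)$, which avoids the Jacobson polynomials. Let $I$ be the augmentation ideal of $u(H)$, the two-sided ideal generated by $H$. Since $H$ is spanned by basis vectors $e_i$ and nilpotent as a Lie algebra, I expect $I$ to be a nilpotent ideal. To see this, I will use the PBW basis together with a weight filtration: give $e_i$ the weight equal to its depth in the combined lower central and $p$-power filtration, which is the standard argument that a finite-dimensional restricted Lie algebra that is Lie-nilpotent with $p$-nilpotent basis has nilpotent augmentation ideal. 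Every $h\in H\subset I$ then satisfies $h^{p^N}=0$ in $u(H)$ for large $N$, and since $H$ embeds in $u(H)$ with $h^{[p]^N}=h^{p^N}$, this gives $h^{[p]^N}=0$. The same argument applies to $L$.

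An alternative for this last step is to show directly that the $p$-nilpotent elements form a restricted ideal, using Jacobson's formula~\eqref{eq:jacobsonformula} and the nilpotency of $\ad$. I expect the augmentation-ideal route to be the cleaner one to write down.
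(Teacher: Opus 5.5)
Your reduction to the prime-field pairs (the choice of $k$, the base change via Proposition~\ref{prop:base}, and the transfer of $\dim L'$ and $\dim H'$) is exactly the paper's and is fine. The gap is in the $p$-nilpotency step. You invoke as standard that a finite-dimensional restricted Lie algebra which is Lie-nilpotent and has a basis of $p$-nilpotent vectors has nilpotent augmentation ideal. That principle is false.

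Here is a counterexample over $\Ft$. Take $\spann{x,y,z}$ with $[x,y]=z$ central, $x^{[2]}=y^{[2]}=0$ and $z^{[2]}=z$; Jacobson's criterion holds because every $(\ad e)^2$ vanishes. This algebra has class $2$. By \eqref{eq:jacobsonformula}, $(x+y+z)^{[2]}=[x,y]+z^{[2]}=z+z=0$, so $x,y,x+y+z$ is a basis of vectors with zero $2$-map. Yet $z^{[2]}=z$, so the algebra is not $2$-nilpotent, and $z$ is a nonzero idempotent in the augmentation ideal of $u$. Correspondingly, the ``depth'' of $z$ in the combined lower central and $p$-power filtration is infinite, so your weight argument has no finite weights to work with.

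Checking the basis vectors together with Lie nilpotency therefore cannot suffice. That this filtration actually reaches $0$ for $L$ and $H$ is the entire content of the step, and it has to be read off the defining tables. Your proposal never does this.

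The missing ingredient is what the paper supplies: an ordering $z_1,\dots,z_n$ of the basis with $[\,\cdot\,,z_j]\in I_{j-1}$ and $z_j^{[p]}\in I_{j-1}$, where $I_j=\spann{z_1,\dots,z_j}$. For $H_k$ this ordering is $c,y_q,\dots,y_4,w,x_2,y_2,y,x$. The paper then applies Jacobson's formula once to get $m^{[p]}\in I_{j-1}$ for every $m\in I_j$, and this works verbatim over any $F$.

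If you prefer your augmentation-ideal route, the same flag yields finite weights recursively. Set $w(z_n)=1$, and choose $w(z_l)$ at least $w(z_i)+w(z_j)$ whenever $z_l$ occurs in $[z_i,z_j]$, and at least $p\,w(z_j)$ whenever it occurs in $z_j^{[p]}$. This is well defined because such $l$ are smaller than $i,j$. PBW straightening then shows that products of total weight above $(p-1)\sum_i w(z_i)$ vanish. Hence the augmentation ideal is nilpotent and $h^{[p]^N}=h^{p^N}=0$. That argument is correct, but it is longer than the paper's direct induction with Jacobson's formula, and it rests on the same table-dependent flag.
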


\begin{proof}
Given $n\ge p+5$, set $k\coloneqq n-4$ if $p=2$ and $k\coloneqq n-p-2$ if $p$ is odd, so that
$k\ge3$, and let $(L_0,H_0)$ be the pair constructed above over the prime field,
namely $(L_k,H_k)$ if $p=2$ and $(L_{p,k},H_{p,k})$ if $p$ is odd. Put
$L\coloneqq L_0\otimes_{\Fp}F$ and $H\coloneqq H_0\otimes_{\Fp}F$. By
Theorem~\ref{thm:char2} or~\ref{thm:oddp} and Proposition~\ref{prop:base},
\[
u(L)\cong u(L_0)\otimes_{\Fp}F\cong u(H_0)\otimes_{\Fp}F\cong u(H),
\]
while Lemma~\ref{lem:valid2} or~\ref{lem:validp},
together with Proposition~\ref{prop:base}, gives $\dim L'=p\neq p+1=\dim H'$.

These algebras are $p$-nilpotent. To see this, order the basis of $H_k$ as
$c,y_q,\dots,y_4,w,x_2,y_2,y,x$ and that of $H_{p,k}$ as
$c,y_{p^{k-1}},\dots,y_{p^2},x_p,y_p,u_{p-1},\dots,u_1,y,x$ (and analogously
for $L_k$ and $L_{p,k}$). Writing $z_1,\dots,z_n$ for this ordered basis and letting
$I_j\coloneqq \spann{z_1,\dots,z_j}$, the defining tables show that every bracket
$[\,\cdot\,,z_j]$ and every power $z_j^{[p]}$ lies in $I_{j-1}$. Hence for
$m=\sum_{i\le j}\lambda_iz_i\in I_j$, Jacobson's
formula~\eqref{eq:jacobsonformula} gives
$m^{[p]}=\sum_{i\le j}\lambda_i^{\,p}z_i^{[p]}+\Lambda\in I_{j-1}$, where
$\Lambda$ is a sum of iterated brackets of the summands $\lambda_iz_i\in I_j$.
Iterating, $m^{[p]^n}\in I_0=0$. The scalar extensions are $p$-nilpotent by
the same computation on the same ordered basis.
\end{proof}

\section{Concluding remarks}\label{sec:concl}

\subsection*{The examples among the determined invariants}

We first situate the counterexamples against the determined invariants.
If $u(L)\cong u(H)$, then by~\cite{UsefiPAMS,UsefiSurvey} the
dimension-subalgebra quotients $D_i(L)/D_{i+1}(L)$ and $D_i(H)/D_{i+1}(H)$
are isomorphic, so the associated graded restricted Lie algebras $\gr L$
and $\gr H$ are isomorphic, the nilpotency classes of $p$-nilpotent
algebras differ by at most one, and by~\cite{UsefiPacific} the quotients
$L/({L'}^{p}+\gamma_3(L))$ and $H/({H'}^{p}+\gamma_3(H))$ are isomorphic. All
of this necessarily holds for our pairs. What the present examples show is
that the determinacy stops there: the dimension of the derived subalgebra,
the most basic invariant not captured by the graded data, is already lost in
$u(L)$, in every positive characteristic.

\subsection*{The non-restricted enveloping algebra}

The phenomenon is genuinely restricted. Forgetting the $p$-map, $L$ and $H$
are ordinary Lie algebras, and one may ask whether $U(L)\cong U(H)$. In prime
characteristic this problem is itself known to have a negative answer
\cite{RileyUsefi,SchneiderUsefi}, but our pairs are not of that kind. This is
immediate from the standard fact that, for any Lie algebra~$\mathfrak g$, the
abelianisation of its enveloping algebra is
$U(\mathfrak g)^{\mathrm{ab}}\cong U(\mathfrak g/\mathfrak g')=S(\mathfrak g/\mathfrak g')$,
a polynomial algebra in $\dim(\mathfrak g/\mathfrak g')$ variables. As both
the abelianisation and its number of variables are isomorphism invariants,
$U(\mathfrak g)$ determines $\dim(\mathfrak g/\mathfrak g')$. The members of
each of our pairs have equal dimension but derived subalgebras of dimensions
$p$ and $p+1$, so $\dim(L/L')\ne\dim(H/H')$ and hence $U(L)\not\cong U(H)$.

In the restricted case the same computation reads instead
$u(L)^{\mathrm{ab}}\cong u(L/I)$, where $I$ is the $p$-ideal generated by $L'$:
the relations $x^p=x^{[p]}$ collapse the whole $p$-closure of $L'$ in the
commutative quotient. For our pairs $L'$ and $H'$ differ in dimension, yet the
$p$-ideals they generate have the same dimension. This is exactly what allows
$u(L)\cong u(H)$ while $U(L)\not\cong U(H)$.

\subsection*{How the examples were found}

The examples of this article were discovered by a computer search guided by
the determined invariants. 
The code used for this search was developed with the assistance of \texttt{Claude Code}.
Let us describe the process briefly. The mathematical arguments and final verification are independent of the
computer search.

The determined invariants dictate the shape of the search space. Since the
dimension-subalgebra quotients are determined by
$u(L)$, the two members of a counterexample pair must share
all their graded invariants, and the search must therefore take place among
\emph{filtered deformations}: we fixed, for each small dimension, a graded
restricted Lie algebra over $\Ft$ (an assignment of weights to a basis,
compatible with the towers of the $2$-map and with a generating
bracket), and parametrised its deformations, allowing each entry of the
bracket table and of the $2$-map to be perturbed by terms of strictly
larger weight, subject to the Jacobi identity and Jacobson's criterion.

Candidate pairs $(L,H)$ of deformations of one graded algebra were then
screened by invariants: a pair is of interest only if $L\not\cong H$. In
practice we started by using the cheap certificate $\dim L'\neq\dim H'$, while the determined invariants, such
as the dimension-subalgebra quotients, agree. For each surviving pair we
searched for an isomorphism $u(L)\to u(H)$ directly. In dimension 7 the
sampling produced the pair $(L_3,H_3)$ of Section~\ref{sec:char2}. 

After checking that the counterexample did not generalise easily to odd characteristic, we proceeded in a similar fashion,
setting $p=3$ and dimension 8, where we found the first pair that led us to the full generalisation.

\section*{Acknowledgements}
We would like to thank Diego García Lucas for very fruitful conversations about this problem.


\begin{thebibliography}{99}

\bibitem{Brauer}
R.~Brauer, \emph{Representations of finite groups}, in: Lectures on Modern
Mathematics, Vol.~I, Wiley, New York, 1963, pp.~133--175.

\bibitem{CPRW}
R.~Campos, D.~Petersen, D.~Robert-Nicoud and F.~Wierstra, \emph{Lie,
associative and commutative quasi-isomorphism}, Acta Math.\ \textbf{233}
(2024), no.~2, 195--238.

\bibitem{Deskins}
W.~E. Deskins, \emph{Finite Abelian groups with isomorphic group algebras},
Duke Math.\ J.\ \textbf{23} (1956), 35--40.

\bibitem{GLMdR}
D.~Garc\'ia-Lucas, L.~Margolis and \'A.~del~R\'io, \emph{Non-isomorphic
$2$-groups with isomorphic modular group algebras}, J.~Reine Angew.\ Math.\
\textbf{783} (2022), 269--274.

\bibitem{Hertweck}
M.~Hertweck, \emph{A counterexample to the isomorphism problem for integral
group rings}, Ann.\ of Math.\ (2) \textbf{154} (2001), no.~1, 115--138.

\bibitem{Jacobson}
N.~Jacobson, \emph{Lie Algebras}, Interscience Tracts in Pure and Applied
Mathematics \textbf{10}, Interscience Publishers, New York--London, 1962.

\bibitem{Jennings}
S.~A. Jennings, \emph{The structure of the group ring of a $p$-group over a
modular field}, Trans.\ Amer.\ Math.\ Soc.\ \textbf{50} (1941), 175--185.

\bibitem{Margolis}
L.~Margolis, \emph{The modular isomorphism problem: a survey}, Jahresber.\
Dtsch.\ Math.-Ver.\ \textbf{124} (2022), no.~3, 157--196.

\bibitem{PassiSehgal}
I.~B.~S. Passi and S.~K. Sehgal, \emph{Isomorphism of modular group
algebras}, Math.\ Z.\ \textbf{129} (1972), 65--73.

\bibitem{Quillen}
D.~G. Quillen, \emph{On the associated graded ring of a group ring},
J.~Algebra \textbf{10} (1968), 411--418.

\bibitem{RileyUsefi}
D.~M. Riley and H.~Usefi, \emph{The isomorphism problem for universal
enveloping algebras of Lie algebras}, Algebr.\ Represent.\ Theory
\textbf{10} (2007), no.~6, 517--532.

\bibitem{Sandling}
R.~Sandling, \emph{The isomorphism problem for group rings: a survey}, in:
Orders and their Applications (Oberwolfach, 1984), Lecture Notes in Math.\
\textbf{1142}, Springer, Berlin, 1985, pp.~256--288.

\bibitem{SchneiderUsefi}
C.~Schneider and H.~Usefi, \emph{The isomorphism problem for universal
enveloping algebras of nilpotent Lie algebras}, J.~Algebra \textbf{337}
(2011), 126--140.

\bibitem{SF}
H.~Strade and R.~Farnsteiner, \emph{Modular Lie Algebras and their
Representations}, Monographs and Textbooks in Pure and Applied Mathematics
\textbf{116}, Marcel Dekker, New York, 1988.

\bibitem{UsefiPAMS}
H.~Usefi, \emph{The restricted isomorphism problem for metacyclic restricted
Lie algebras}, Proc.\ Amer.\ Math.\ Soc.\ \textbf{136} (2008), no.~12,
4125--4133.

\bibitem{UsefiPacific}
H.~Usefi, \emph{Isomorphism invariants of restricted enveloping algebras},
Pacific J.~Math.\ \textbf{246} (2010), no.~2, 487--494.

\bibitem{UsefiSurvey}
H.~Usefi, \emph{Isomorphism invariants of enveloping algebras}, in:
Noncommutative Rings and their Applications, Contemp.\ Math.\ \textbf{634},
Amer.\ Math.\ Soc., Providence, RI, 2015, pp.~253--265.

\end{thebibliography}
\end{document}